\numberwithin{equation}{section}
\newtheorem{theorem}{Theorem}[section]
\newtheorem{proposition}[theorem]{Proposition}
\newtheorem{lemma}[theorem]{Lemma}
\newtheorem{remark}{Remark}[section]
\newcommand{\LN}{\left\|}
\newcommand{\RN}{\right\|}
\newcommand{\LV}{\left|}
\newcommand{\RV}{\right|}
\newcommand{\LC}{\left(}
\newcommand{\RC}{\right)}
\newcommand{\p}{\partial}
\DeclareMathOperator{\supp}{supp}
\DeclareMathOperator{\diam}{diam}
\newcommand{\R}{\mathbb R}
\newcommand{\Kn}{\mathsf{Kn}}
\newcommand{\rd}{\mathrm{d}}
\title[ ]{Inverse problems for the stationary transport equation in the diffusion scaling}
\author[Lai]{Ru-Yu Lai}
\address{School of Mathematics, University of Minnesota, Minneapolis, MN 55455, USA}
\curraddr{}
\email{rylai@umn.edu }
\author[Li]{Qin Li}
\address{Department of Mathematics, University of Wisconsin-Madison, Madison, WI 53705, USA}
\curraddr{}
\email{qinli@math.wisc.edu}
\thanks{}
\author[Uhlmann]
{Gunther Uhlmann}
\address{Department of Mathematics, University of Washington, Seattle, WA 98195, USA;}\address{HKUST Jockey Club Institute for Advanced Study, HKUST, Clear Water Bay, Kowloon, Hong Kong.}
\email{gunther@math.washington.edu}
\thanks{  }
\date{\today}
\begin{document}
\maketitle
\tableofcontents
\begin{abstract}
	
	We consider the inverse problem of reconstructing the optical parameters of the radiative transfer equation (RTE) from boundary measurements in the diffusion limit.
	In the diffusive regime (the Knudsen number $\Kn\ll 1$), the forward problem for the stationary RTE is well approximated by an elliptic equation. However, the connection between the inverse problem for the RTE and the inverse problem for the elliptic equation has not been fully developed. This problem is particularly interesting because the former one is mildly ill-posed , with a Lipschitz type stability estimate, while the latter is well known to be severely ill-posed with a logarithmic type stability estimate.
	In this paper, we derive stability estimates for the inverse problem for RTE and examine its dependence on $\Kn$. We show that the stability is Lipschitz in all regimes, but the coefficient deteriorates as $e^{\frac{1}{\Kn}}$, making the inverse problem of RTE severely ill-posed when $\Kn$ is small. In this way we connect the two inverse problems.
Numerical results agree with the analysis of worsening stability as the Knudsen number gets smaller.

\end{abstract}

\section{Introduction}

Optical tomography (OT) is a technique that uses low-energy visible or near-infrared light in the wavelength region ($650$nm $\sim$ $900$nm) to illuminate highly scattering media \cite{Arridge1999}. In OT, based on measurements of scattered and transmitted light intensities on the surface of the medium, a reconstruction of the spatial distribution of the optical properties, for instance, absorption coefficient, $\sigma_a$, and scattering coefficient, $\sigma_s$, inside the medium is attempted. OT has potential applications into a variety of  science and engineering fields, including oceanography, atmospheric science, astronomy, and neutron physics \cite{McCor}. More recently the application of OT to medical imaging has received considerable attraction, where visible or near-infrared light are sent into tissues, and the optical parameters are reconstructed to  distinguish between healthy and unhealthy tissues.


However, the problem has not been fully understood mathematically. In fact, there are a variety of forward models for describing photon propagation. The two widely used models are the radiative transfer equation (RTE, also known as the linear Boltzmann equation) and the diffusion equation (DE). What is intriguing here is that the two models are good approximation to each other in the diffusion regime where the Knudsen number ($\Kn$) in RTE is small, but the corresponding inverse problems are proved to be mildly ill-posed and severely ill-posed, respectively. The aim of this article is to study the connection between the two models in the inverse problem setting. More precisely we study the stability of the parameter reconstruction for RTE, and in particular its dependence on $\Kn$. We will show  that despite the stability is Lipschitz like, the Lipschitz constant blows up in an exponential fashion  for small  $\Kn$, showing the severe ill-posedness when RTE is in the DE regime.

We now give a brief review of both models.

\subsection{RTE and its inverse problem}
We now write down the time-independent radiative transfer equation (RTE), which models photon transport in tissues. For survey papers on the subject see ~\cite{Arridge1999, Bal_transport, Somersalo2005}: for $x\in\Omega$ and $v\in V$,
\begin{align}\label{RTE}
\begin{cases}
      v\cdot \nabla_x f(x,v) + (\sigma_a(x)+\sigma_s(x)) f(x,v) - \sigma_s(x) \int_V p( v',v) f(x,v')dv'=0,\\
      f|_{\Gamma_-} = \text{given data}.
      \end{cases}\,
\end{align}
In equation \eqref{RTE}, $f(x,v)$ is defined on the phase space and is the density of particles at position $x$ with velocity $v$ in an open set $V$,  $\sigma_a$ and $\sigma_s$ are two optical parameters representing the absorption coefficient and the scattering coefficient. They model how likely a photon particle is absorbed or scattered by the media. The scattering phase function is
\[
k(x,v,v')=\sigma_s(x)p(v,v')
\]
that defines the probability that during a scattering event, a photon from direction $v'$ is scattered in the direction $v$ at the point $x$. We also define the total absorption coefficient $\sigma=\sigma_a+\sigma_s$. It measures how likely a photon with velocity $v$ disappears (from either being absorbed or scattered).

The physical domain, denoted by $\Omega$, is a subset of $\mathbb{R}^n,\ n=2,3$. It is a bounded open convex set with $C^1$ boundary $\p\Omega$. Let $V$ be the velocity domain which is an open set in $\R^n$. We assume that there are constants $M_1$ and $M_2$ such that
\begin{equation}\label{eqn:v_bound}
0<M_1 < |v|< M_2
\end{equation}
for all $v$. In addition, $\Gamma_+$ and $\Gamma_-$ are used to denote the coordinates on the physical boundary associated with the outgoing and incoming velocities, respectively:
$$\Gamma_{\pm}=\{(x,v)\in \p \Omega\times V: \pm \ n_x\cdot v > 0\},$$
where $n_x$ is the unit outer normal to $\p\Omega$ at the point $x\in \p\Omega$.  The boundary condition, therefore, is placed on $\Gamma_-$.

The inverse problem for the transport equation amounts to reconstructing the unknown optical parameters $\sigma_a+\sigma_s$ and $k$ from the boundary measurements. Namely, we call the map, from the incoming density $f|_{\Gamma_-}$ to the outgoing density $f|_{\Gamma_+}$, the \textit{albedo} operator:
$$
     \mathcal{A}:f|_{\Gamma_-}\rightarrow f|_{\Gamma_+}; 
$$
and reconstruct $\sigma_a+\sigma_s$ and $k$ from the entire albedo map.

The theoretical approach on the reconstruction of the optical parameters $(\sigma, k)$ is based on the singular decomposition of the Schwartz kernel $\alpha=\mathcal{A}_1+\mathcal{A}_2+\mathcal{A}_3$ of the albedo operators $\mathcal{A}$ where $\mathcal{A}_j$, $j=1,2,3$, are described in section 2.2. For detailed discussion, we refer the readers to \cite{CS2,  CS98, Stefanov_2003}. Here we briefly discuss the approach to reconstruct the coefficients $\sigma$ and $k$ through the study of the kernel. 

For the time-independent problem in dimension $n\geq 3$, one can observe that $\mathcal{A}_1$ and $\mathcal{A}_2$ are delta functions, and $\mathcal{A}_3$ is a locally $L^1$ function. Thus, $\mathcal{A}_3$ can be distinguished from $\mathcal{A}_1+\mathcal{A}_2$. 
Moreover, since $\mathcal{A}_1$ and $\mathcal{A}_2$ have different degrees of singularities, $\sigma$ can be reconstructed from $\mathcal{A}_1$ and then based on the knowledge of $\sigma$, one can further recover $k$ from $\mathcal{A}_2$. As for the time-dependent problem, a similar procedure can be used to recover both $\sigma$ and $k$ for any dimension $n\geq 2$ without additional assumption on $k$. However, in the two-dimensional case for the time-independent problem, since $\mathcal{A}_2$ is a locally $L^1$ function as well, it can not be distinguished from $\mathcal{A}_3$. Therefore, the same approach does not work for recovering $k$. Although, $\mathcal{A}_1$ and $\mathcal{A}_2$ are still distinguishable, as a result, one can still recover $\sigma$ for the stationary case in two dimensions.



The inverse stationary transport problem has less data compared to the time-dependent problem due to the absence of the time variable. 
This stationary inverse problem is overdetermined in dimension $n\geq 3$ since the kernel of $\mathcal{A}$ depends on higher dimensions than what the parameters $\sigma$ and $k$ depend on.
The unique result for both optical parameters $\sigma$ and $k$ was studied in \cite{CS3, CS98}. The associated stability estimate was derived in \cite{Bal14, Bal10, Bal18, Wang1999} in dimension $n\geq 3$. 
However, in $n=2$, it is only formally determined for the recovery of $k$ and is still overdetermined for $\sigma$. It was showed in 
\cite{SU2d} that, under a smallness assumption on $k$, the coefficients $\sigma$ and $k$ can be uniquely determined.
Finally, we note that the inverse boundary value problem for time-dependent transport equation, unique determination of $\sigma$ and $k$ were studied in \cite{CS1, CS2} for any dimension $n\geq 2$.


\subsection{Diffusion equation and its inverse problem}

Depending on the relation between the scattering coefficient $\sigma_s$ and the absorption coefficient $\sigma_a$, the RTE sometimes can be approximated by the diffusion approximation. More specifically, in scatter dominated materials the diffusion approximation holds valid, but in materials where $\sigma_a$ dominates or is comparable to $\sigma_s$, the diffusion approximation is not a suitable model. The first case is seen in breast tissue where $\sigma_s$ is much larger than $\sigma_a$ at appropriate wavelengths ($650$nm $\sim$ $900$nm)~\cite{DDA}. More works on breast images studies can be found in~\cite{Colak, Fantini}. However, when the absorption coefficient of the medium is similar to the scattering coefficient, the diffusion approximation might not be a good approximation to describe the photo migration in biological tissues.
For example, in the blood vessels or organs with a high blood perfusion, such as in the liver, the approximation does not hold at any wavelength. 
We refer the interested readers to \cite{HA1996}, for instance.


Assume that scattering effect dominates, then the diffusion equation is modeled by:
\begin{align}
-C\nabla_x\cdot ( \sigma_s^{-1}  \nabla_x \Phi(x)) +\sigma_a \Phi(x) =0\, 
\end{align}
with a constant $C$. Note that the diffusive media takes the reciprocal of $\sigma_s$, and the photon intensity is defined by:
$$
\Phi(x)=\int_V f(x,v)\rd{v}\,.
$$
We refer to \cite{Bal_transport} for a detailed discussion on the transport equation in the diffusive regime, and only cite the results here.

Now we consider the strong scattering case and we define
\begin{align}\label{sigma_a}
 \sigma_{\Kn}(x)= \Kn\sigma_a+\Kn^{-1}\sigma_s\ \ \ \ \hbox{for }0<\Kn\ll 1 .
\end{align}
and $k=\Kn^{-1}\sigma_s(x)$ with $p(v',v)$ set as $1$. Hereafter, we replace $\sigma$ by $\sigma_{\Kn}$ and $k$ by $\Kn^{-1}\sigma_s$ in \eqref{RTE} with time-independent, then we have the following equations:
\begin{align}\label{diffusion}
    \left\{ \begin{array}{lc}
    v\cdot \nabla_x f(x,v) +\sigma_{\Kn}(x)f(x,v) - \Kn^{-1} \sigma_s(x)\int_V  f(x,v')dv'=0 & \hbox{in }\Omega
    \times V, \\
    f|_{\Gamma_-} =f_- . & \\
    \end{array}\right. 
\end{align}

The following result can be proved from~\cite{BardosSantosSentis:84,BLP:79,LiLuSun2015JCP,LLS_geometry,WG2014}:
\begin{theorem}\label{thm:diffuse}
Suppose that $f$ solves~\eqref{diffusion}. As $\Kn \to 0$, $f (x, v)$ converges to $\rho(x)$, where $\rho(x)$ solves the diffusion equation:
\begin{equation}\label{eqn:elliptic}
 C\nabla_x\cdot (\sigma_s^{-1} \nabla_x\rho) -\sigma_a\rho =0\,.
\end{equation}
Here C is a constant depending on the dimension of the problem. The boundary condition is determined by:
\begin{equation}
 \rho|_{\p\Omega} = \xi_f\,
 \end{equation}
with $\xi_f (x_0) = f^l_{z\to\infty}$ where $f^l$ solves the boundary layer equation:
 \begin{equation*}
v\partial_zf^l = \sigma_s\left(\int f^l(z,v')\rd{v'} - f^l\right)\,,\quad z \in [0,\infty)\quad \text{with}\quad f^l|_{z=0}  = f_-(x_0,v).
 \end{equation*}
 Moreover, one has
\begin{enumerate}
 \item $\int (v\cdot n_{x_0})f(x_0,v)\rd{v} = \Kn \sigma_s^{-1}\p_{n_{x_0}}\rho(x_0) +\mathcal{O}(\Kn^2)$; and
 \item  if $f_-(x,v) = f_-(x)$ is independent of $v$ for all $x\in\p\Omega$, then $ \rho|_{\p\Omega} = \xi_f = f_-$.
\end{enumerate}
\end{theorem}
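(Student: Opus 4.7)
The plan is to combine a formal Hilbert expansion in $\Kn$ with a boundary-layer correction near $\p\Omega$, then close the argument with a uniform-in-$\Kn$ remainder estimate. Write $f \sim f_0 + \Kn f_1 + \Kn^2 f_2 + \cdots$, set $\langle g\rangle := \int_V g(v')\,\rd{v'}$, and denote the scattering operator $Lg := \sigma_s(g - \langle g\rangle)$. Substituting into \eqref{diffusion} and matching powers of $\Kn$: the $\Kn^{-1}$ balance $Lf_0 = 0$ forces $f_0 = \rho(x)$ to be $v$-independent; the $\Kn^0$ balance $v\cdot\nabla_x\rho + Lf_1 = 0$ inverts on $(\ker L)^\perp$ to $f_1 = -\sigma_s^{-1}\,v\cdot\nabla_x\rho + g_1(x)$; and the $\Kn^1$ balance, after averaging (the Fredholm solvability condition) and using $\langle v\rangle = 0$ together with $\langle v\otimes v\rangle = c\,I$ for symmetric $V$, yields $\nabla_x\cdot(-c\,\sigma_s^{-1}\nabla_x\rho) + \sigma_a\rho = 0$, which is exactly \eqref{eqn:elliptic} with $C = c$.

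The interior profile does not in general match $f_-$ on $\Gamma_-$, so a boundary-layer corrector is needed. Near $x_0 \in \p\Omega$ I introduce the stretched normal coordinate $z = -(x-x_0)\cdot n_{x_0}/\Kn$ and seek $f^l(z,v)$ solving the half-space problem stated in the theorem, with $f^l|_{z=0} = f_-(x_0,v)$. The cited works supply well-posedness, existence of the $v$-independent far-field limit $\xi_f(x_0) := \lim_{z\to\infty} f^l(z,v)$, and exponential decay of $f^l - \xi_f(x_0)$. Matching the far field of the boundary layer to the trace of the interior expansion then fixes the Dirichlet datum $\rho|_{\p\Omega}(x_0) = \xi_f(x_0)$. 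A cutoff-and-remainder estimate, closed by the spectral gap of $L$ on $(\ker L)^\perp$, upgrades the formal expansion to the convergence $f\to\rho$ as $\Kn\to 0$.

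Claim (1) then follows from a direct moment computation: since $f_0 = \rho$ is $v$-independent and $V$ is symmetric, $\int(v\cdot n_{x_0})\rho\,\rd{v} = 0$, so the leading contribution is $\Kn\int(v\cdot n_{x_0})f_1\,\rd{v}$; substituting $f_1 = -\sigma_s^{-1}\,v\cdot\nabla_x\rho$ and using $\langle v\otimes v\rangle = c\,I$ produces the advertised $\Kn\,\sigma_s^{-1}\p_{n_{x_0}}\rho$ up to the constant $c$ absorbed into the normalization of \eqref{eqn:elliptic}, while the boundary-layer contribution to this interior moment is exponentially small. For Claim (2), if $f_-(x,v)$ is $v$-independent, the constant profile $f^l(z,v) \equiv f_-(x_0)$ solves the boundary-layer equation with the correct datum at $z=0$, so by uniqueness $\xi_f(x_0) = f_-(x_0)$.

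The main obstacle is the uniform-in-$\Kn$ remainder estimate. The boundary-layer corrector has $O(1)$ amplitude on a strip of thickness $O(\Kn)$, so its normal derivative is $O(\Kn^{-1})$ and the naive energy argument diverges. The standard remedy is to cut the corrector off at an intermediate scale, push the resulting cutoff errors into higher-order correctors $f_2$, $f_3$, and close the bound by exploiting the coercivity of $L$ on $(\ker L)^\perp$; for these technical details I would defer to the references already cited above.
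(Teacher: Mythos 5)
The paper gives no proof of this theorem --- it is quoted as a known result imported from the cited diffusion-limit literature --- and your Hilbert-expansion-plus-boundary-layer argument is precisely the one those references carry out, so the approach matches and the formal computations (the $\Kn^{-1}$, $\Kn^{0}$, $\Kn^{1}$ balances, the Fredholm solvability condition producing \eqref{eqn:elliptic}, the half-space matching fixing $\rho|_{\p\Omega}=\xi_f$, and the constant-profile uniqueness argument for claim (2)) are correct. One caveat worth recording: in claim (1) the boundary-layer contribution to the normal flux does not vanish because the layer is exponentially small at $x_0$ (it is $O(1)$ at $z=0$); the correct reason is that integrating the half-space equation over $v$ shows $\int_V (v\cdot n_{x_0}) f^l\,\rd{v}$ is independent of $z$ and hence equals its far-field value $0$, so the leading-order flux comes only from the interior term $\Kn f_1$.
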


\begin{remark}
Define the averaged albedo operator for~\eqref{diffusion} by
\begin{equation*}
\mathcal{A}[f_-] =  \Kn^{-1}\int (v\cdot n_x)f(x,v)\rd{v}\,,
\end{equation*}
and the Dirichlet-to-Neumann (DtN) map for the diffusion equation~\eqref{eqn:elliptic} as:
\begin{equation*}
\Lambda[ \rho|_{\p\Omega}] = \p_{n_{x}}\rho(x)\,,
\end{equation*}
assuming that the incoming boundary condition $f_-$ is homogeneous in $v$, then according to Theorem~\ref{thm:diffuse}, $f_-(x_0) = \rho(x_0)$ on $\p\Omega$, and the averaged albedo operator converges to the DtN map, meaning:
\begin{equation}
\mathcal{A}[f_-] -\Lambda[\rho|_{\partial\Omega}]=  \Kn^{-1}\int (v\cdot n_x)f(x,v)\rd{v} - \p_{n_{x_0}}\rho(x_0) = \mathcal{O}(\Kn)\,.
\end{equation}
\end{remark}

It is well-known that the inverse problem for the elliptic equation~\eqref{eqn:elliptic}, that is, using the DtN map to recover the coefficients in \eqref{eqn:elliptic}, is severely ill-posed. In particular, it has a  logarithmic type of stability estimate. This kind of estimate was first derived by Alessandrini in \cite{A1}  
and was shown to be optimal in \cite{Mandache}. For reviews of the stability issue and the Calder\'on's problem, we refer to \cite{Astability, uhlmann2009electrical}. 
In contrast to the inverse problem for the elliptic equation, the inverse problem of recovering the media in RTE \eqref{diffusion} has a Lipschitz type stability, see \cite{Bal14, Bal10, Bal18, Wang1999}.




\subsection{Main result}\label{mainresult}

In this article, we are interested in bridging the stability estimates for these two inverse problems. 
We are motivated by the study of increasing stability behavior for several elliptic inverse problems when the frequency gets higher. It is known that the logarithmic stability makes reconstruction algorithms challenging since a small error in the data could be magnified exponentially in the numerical reconstruction.
The research on increasing stability therefore arises from the desire to design a more reliable reconstruction algorithm. 
Its central idea is to obtain stability estimates that contain two parts: one is Lipschitz, the other is logarithmic, and to derive associated coefficients that explicitly depend on certain parameters in the forward model. With the parameters chosen in a suitable range, one part of the estimate dominates the other, which leads to increasing stability or stability deterioration.  
The problems along the line of thinking in different contexts have been addressed in \cite{I, ILW16, INUW, iw14, Ldiff, Liang, NUW}, and in~\cite{time_harmonic, BalMonard_time_harmonic} the authors particularly studied the stability of the inversion with respect to the modulation frequency in time-harmonic setting for RTE, and found that the increasing of the frequency brings more details in the recovery. See also \cite{Ibook} for the study of increasing stability in different problems.
 
In this paper we determine the stability of inverting RTE by using the albedo operator, we trace its explicit dependence on $\Kn$. 
Assume that the media $(\sigma_{\Kn}, \Kn^{-1}\sigma_s)$ is admissible (to be clear in Section \ref{wellposedness}) to have the boundary value problem~\eqref{diffusion} well-posed:
\begin{align}\label{wellpose}
    \|\tau \sigma_{\Kn}\|_{L^\infty}<\infty,\ \   
    \left\|\tau  \int_V \Kn^{-1} \sigma_s(x) dv\right\|_{L^\infty}<\infty,\ \ \hbox{and}\ \ \sigma_{\Kn}\geq \int_V \Kn^{-1}\sigma_s(x) dv 
\end{align}
for a.e. $x\in \Omega$, and let
\begin{equation}\label{eqn:P_space}
\mathcal{P}=\{u\in H^{3/2+r'}(\Omega):\ u\geq 0,\ \supp(u)\subset\Omega,\ \|u\|_{H^{3/2+r'}(\R^n)}\leq M_3\}
\end{equation}
for some $r'>0 $. Here $\text{supp}(u)$ denotes the compact support of a function $u$. 

Our main result is stated as follows and its proof will be given in section \ref{stability_est}.
\begin{theorem}[Stability estimate with explicit $\Kn$ dependence]\label{thm1.1}
Let $\Omega$ in $\mathbb{R}^n,\ n=2,3$ be a bounded open convex set with $C^1$ boundary $\p\Omega$, and let $0<\Kn<1$. Suppose the assumption \eqref{wellpose} holds and that $\sigma_a,\ \sigma_s,\ \tilde{\sigma}_a,$ and $\tilde{\sigma}_s$ are in $\mathcal{P}$. Denote $\mathcal{A}$ and $\tilde{\mathcal{A}}$ are albedo operators associated with media pairs $(\sigma_{\Kn}, \Kn^{-1}\sigma_s)$ and $(\tilde\sigma_{\Kn}, \Kn^{-1}\tilde\sigma_s)$ respectively. Then for some $\theta\in (0,1)$, there exists a constant $C$, independent of $\Kn$, such that the estimate
\begin{align*}
    \|\sigma_{\Kn} - \tilde{\sigma}_{\Kn}\|_{L^\infty} \leq C \Kn^{-1+\theta}  e^{C\theta  \Kn^{-1} } \|\mathcal{A}-\tilde{\mathcal{A}}\|_\ast^\theta 
\end{align*}
holds, where $\|\cdot\|_*$ is the operator norm from $L^1(\Gamma_-,d\xi)$ to $L^1(\Gamma_+,d\xi)$.

Moreover, if
$\Kn< |\log( \|\mathcal{A}-\tilde{\mathcal{A}} \|_\ast)|^{-\alpha} $
for some $\alpha>0$, then 
\begin{align}\label{sigma_s_equ_1_thm}
\|\sigma_s-\tilde{\sigma}_s\|_{L^\infty} 
\leq C\Kn |\log( \|\mathcal{A}-\tilde{\mathcal{A}} \|_*)|^{-\alpha} + C  \Kn^{\theta}  e^{ C\theta{\Kn}^{-1}} \|\mathcal{A}-\tilde{\mathcal{A}} \|_*^\theta\, 
\end{align}
and
\begin{align}\label{sigma_a_equ_1_thm}
\|\sigma_a-\tilde{\sigma}_a\|_{L^\infty} 
\leq C\Kn^{-3} |\log( \|\mathcal{A}-\tilde{\mathcal{A}} \|_*)|^{-\alpha} + C  \Kn^{-2+\theta}  e^{ C\theta{\Kn}^{-1}} \|\mathcal{A}-\tilde{\mathcal{A}} \|_*^\theta\,.
\end{align}
\end{theorem}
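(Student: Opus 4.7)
The plan combines three ingredients: a Lipschitz-type stability for the total attenuation $\sigma_\Kn$ extracted from the singular decomposition of the albedo kernel $\alpha=\alpha_1+\alpha_2+\alpha_3$ reviewed in Section 2.2, the diffusion-limit comparison in Theorem~\ref{thm:diffuse} between the (averaged) albedo operator and the elliptic Dirichlet-to-Neumann (DtN) map, and the classical Alessandrini-type logarithmic stability for the Calder\'on problem associated with the limiting equation~\eqref{eqn:elliptic}. The algebraic identity $\sigma_\Kn=\Kn\sigma_a+\Kn^{-1}\sigma_s$ is then used to separate $\sigma_s$ from $\sigma_a$.

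I would first establish the Lipschitz bound on $\sigma_\Kn-\tilde\sigma_\Kn$. Following the singular decomposition framework, the most singular piece $\alpha_1$ of the albedo kernel carries the ballistic attenuation
\[
E(x,v)=\exp\LC-\int_0^{\tau_-(x,v)}\sigma_\Kn(x-tv)\,\rd t\RC,
\]
and testing $\mathcal{A}-\tilde{\mathcal{A}}$ against incoming data that approximate $\delta$-functions along ballistic rays isolates $\alpha_1-\tilde\alpha_1$ in terms of $\|\mathcal{A}-\tilde{\mathcal{A}}\|_\ast$. The ratio $E/\tilde E$ encodes the X-ray transform of $\sigma_\Kn-\tilde\sigma_\Kn$. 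Because $\sigma_\Kn\sim\Kn^{-1}$ one has $E\sim e^{-c/\Kn}$, so dividing by $E$ to invert produces the exponential amplification $e^{C/\Kn}$. Inverting the X-ray transform and interpolating against the a priori $H^{3/2+r'}$ bound imposed in $\mathcal{P}$ with exponent $\theta\in(0,1)$ then yields
\[
\|\sigma_\Kn-\tilde\sigma_\Kn\|_{L^\infty}\le C\Kn^{-1+\theta}e^{C\theta/\Kn}\|\mathcal{A}-\tilde{\mathcal{A}}\|_\ast^\theta,
\]
which is the first conclusion of the theorem.

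For the splitting I would invoke Theorem~\ref{thm:diffuse} to compare the (appropriately averaged) albedo operator with the DtN map $\Lambda$ of~\eqref{eqn:elliptic}. Accounting for the $\Kn^{-1}$ in the averaging defining the limiting map, this produces $\|\Lambda-\tilde\Lambda\|\lesssim \Kn^{-1}\|\mathcal{A}-\tilde{\mathcal{A}}\|_\ast+C\Kn$. The Alessandrini-type logarithmic stability for the Calder\'on problem of~\eqref{eqn:elliptic} then supplies log-type bounds on $\|\sigma_s-\tilde\sigma_s\|_{L^\infty}$ and $\|\sigma_a-\tilde\sigma_a\|_{L^\infty}$, carrying explicit $\Kn$-dependent prefactors inherited from the diffusion-limit comparison. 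The hypothesis $\Kn<|\log\|\mathcal{A}-\tilde{\mathcal{A}}\|_\ast|^{-\alpha}$ is precisely what is needed to dominate the additive $\Kn$ inside the logarithm so that the rate simplifies to $|\log\|\mathcal{A}-\tilde{\mathcal{A}}\|_\ast|^{-\alpha}$. Inserting these together with the Lipschitz bound on $\sigma_\Kn-\tilde\sigma_\Kn$ into the algebraic identities
\[
\sigma_s-\tilde\sigma_s = \Kn(\sigma_\Kn-\tilde\sigma_\Kn) - \Kn^{2}(\sigma_a-\tilde\sigma_a), \qquad \sigma_a-\tilde\sigma_a = \Kn^{-1}(\sigma_\Kn-\tilde\sigma_\Kn) - \Kn^{-2}(\sigma_s-\tilde\sigma_s),
\]
and carefully tracking the $\Kn$ factors yields~\eqref{sigma_s_equ_1_thm} and~\eqref{sigma_a_equ_1_thm}.

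The hardest step will be the first one: obtaining the sharp $e^{C/\Kn}$ in the Lipschitz bound and ensuring that no extra hidden $\Kn$-dependence enters through the X-ray inversion or the Sobolev interpolation. A secondary obstacle is reconciling the operator-norm settings — the albedo norm $\|\cdot\|_\ast$ is an $L^1\to L^1$ norm while Alessandrini's estimate is usually stated for the DtN map in $H^{1/2}\to H^{-1/2}$ — so the diffusion-limit comparison and the application of the Calder\'on estimate must be executed in compatible topologies via trace/duality arguments consistent with the well-posedness assumption~\eqref{wellpose}.
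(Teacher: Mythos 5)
Your first step --- isolating $\mathcal{A}_1$ from the singular decomposition, reading off the X-ray transform of $\sigma_{\Kn}-\tilde{\sigma}_{\Kn}$ from the ballistic attenuation, picking up the $e^{C/\Kn}$ from dividing by $E\sim e^{-c/\Kn}$ (in the paper this is the mean value theorem with the bound $\beta_{\Kn}\sim \Kn+\Kn^{-1}$), inverting the X-ray transform into $H^{-1/2}$, and interpolating against the $H^{3/2+r'}$ a priori bound --- is exactly the paper's route to the first estimate, and is sound. Note only that the prefactor $\Kn^{-1+\theta}$ comes from the fact that the a priori bound you interpolate against is $\|\sigma_{\Kn}-\tilde{\sigma}_{\Kn}\|_{H^{3/2+r'}}\le 2M_3(\Kn+\Kn^{-1})$, i.e.\ the $\mathcal{P}$-bound is on $\sigma_a,\sigma_s$ and not on $\sigma_{\Kn}$ itself; this is where the extra power of $\Kn^{-(1-\theta)}$ enters.

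The second half of your plan, however, contains a genuine gap: the estimates \eqref{sigma_s_equ_1_thm} and \eqref{sigma_a_equ_1_thm} cannot be obtained by passing through the Dirichlet-to-Neumann map of \eqref{eqn:elliptic} and Alessandrini's estimate, and the paper does not do so. Three concrete obstructions. First, Theorem \ref{thm:diffuse} is a qualitative convergence statement (with an $\mathcal{O}(\Kn)$ remainder for a fixed, $v$-independent datum); it does not give the quantitative operator-norm comparison $\|\Lambda-\tilde{\Lambda}\|\lesssim \Kn^{-1}\|\mathcal{A}-\tilde{\mathcal{A}}\|_\ast + C\Kn$ in any topology compatible with the $L^1\to L^1$ albedo norm, and establishing one would require uniform-in-data boundary-layer estimates that are nowhere available in the paper. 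Second, Alessandrini-type stability produces a logarithm with a \emph{fixed} exponent determined by the dimension and the a priori regularity, whereas the exponent $\alpha$ in \eqref{sigma_s_equ_1_thm}--\eqref{sigma_a_equ_1_thm} is the \emph{same arbitrary} $\alpha$ as in the hypothesis $\Kn<|\log\|\mathcal{A}-\tilde{\mathcal{A}}\|_\ast|^{-\alpha}$; your route cannot reproduce that. Third, \eqref{eqn:elliptic} contains two unknown coefficients ($\sigma_s^{-1}$ in the principal part and $\sigma_a$ in the zeroth-order term), and a single DtN map does not determine both. The actual mechanism is far more elementary: from $\sigma_{\Kn}=\Kn\sigma_a+\Kn^{-1}\sigma_s$ one writes $\|\sigma_s-\tilde{\sigma}_s\|_{L^\infty}\le \Kn^2\|\sigma_a-\tilde{\sigma}_a\|_{L^\infty}+\Kn\|\sigma_{\Kn}-\tilde{\sigma}_{\Kn}\|_{L^\infty}$, bounds the first term by $2M_3\Kn^2$ using only the uniform bound in $\mathcal{P}$, and then converts one factor of $\Kn$ into $|\log\|\mathcal{A}-\tilde{\mathcal{A}}\|_\ast|^{-\alpha}$ using the hypothesis on $\Kn$; the estimate for $\sigma_a$ is symmetric with an extra $\Kn^{-2}$. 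So the logarithmic terms in the theorem are a rewriting of the smallness of $\Kn$, not an imported Calder\'on estimate, and your proposal as written would fail to produce the stated bounds.
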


This theorem indicates that the stability estimates \eqref{sigma_s_equ_1_thm} and \eqref{sigma_a_equ_1_thm} are exponentially bad when $\Kn$ is small, and within certain range of $\Kn$, the logrithmic illposedness of the Cald\'eron problem is recovered. For more detailed discussion, see remarks in the end of section \ref{stability_est}.


This paper is organized as follows. In section \ref{preliminaries}, we discuss preliminaries and state several known results about the albedo operator decomposition. Section \ref{stability_est} is devoted to the study of stability estimate's dependence on the Knudsen number. Numerical examples are provided in section \ref{numerics} that confirm both the Lipschitz stability and the logarithmic ill-posedness for small $\Kn$, and thus the numerical experiments are in agreement with the statements in Theorem~\ref{thm1.1}.


\section{Preliminaries}\label{preliminaries}
 
In this section, we recall several function spaces and introduce notations, as well as, some known results. They are relevant in our setup and the reconstruction of the optical parameters.

\subsection{Function spaces}
We define the Sobolev spaces $H^s(\R^n)$ in the whole space by 
$$
   H^s(\R^n) = \left\{u\in \mathcal{S}':\ \|\langle D\rangle^s u\|_{L^2(\R^n)} \right\},
$$
where $\langle D\rangle^s u:=\mathcal{F}^{-1}((1+|\xi|^2)^{s/2}\mathcal{F} u )$. Here $\mathcal{F}u$ denotes the Fourier transform of $u$ and $\mathcal{S}'$ is the dual of the Schwartz space $\mathcal{S}$.
In addition, for an open set $U$ in $\R^n$, we define the class of functions in $H^s(\R^n)$ which are restricted in $U$ by
$$
    H^s(U)=\left\{u|_U:\ u\in H^s(\R^n) \right\}.
$$

Moreover, we define the measure on $\Gamma_\pm$ by
\begin{equation}\label{eqn:bdy_measure}
d\xi(x,v)=|n_x\cdot v| d\mu(x)dv
\end{equation}
with the measure $d\mu(x)$ defined on the boundary $\p\Omega$. We denote $L^1(\Gamma_\pm, d\xi)$ to be the space consists of functions $u$ such that $$\int_{\Gamma_\pm}|u(x,v)|d\xi(x,v)<\infty.$$

 
\subsection{Kernel of the albedo operator}\label{wellposedness}   

We consider the boundary value problem with Dirichlet boundary condition for the stationary transport equation:
\begin{align*}
\left\{ \begin{array}{lc}
v\cdot \nabla_x f(x,v) + \sigma(x,v) f(x,v) - \int_V k(x,v',v) f(x,v')dv'=0 & \hbox{in }\Omega
\times V, \\
f|_{\Gamma_-} =f_- . &  \\
\end{array}\right. 
\end{align*}
The pair $(\sigma, k)$ is called \textit{admissible} if 
\begin{align}\label{admissible1}
0\leq \sigma  \in L^\infty(\Omega\times V)
\end{align}
\begin{align}\label{admissible2}
0\leq k(x,v',\cdot)\in L^1(V)  
\end{align}
for almost everywhere (a.e.) $(x,v')\in \Omega\times V$. Moreover,  we define the scattering cross-sections by $\int_Vk(x,v',v)dv$ which is in $L^\infty(\Omega\times V).$ 
The collected data is defined by the albedo operator $$\mathcal{A}:f|_{\Gamma_-}\rightarrow f|_{\Gamma_+},$$ which maps the incoming Dirichlet type boundary condition into the outgoing one. In particular, $\mathcal{A}$ is a bounded operator from $L^1(\Gamma_-,d\xi)$ to $L^1(\Gamma_+,d\xi)$, as shown in \cite{CS98}.

In the diffusion regime, we consider optical parameters $(\sigma_{\Kn}, \Kn^{-1}\sigma_s)$, instead of $(\sigma, k)$ as in section \ref{mainresult}.
Assume that $(\sigma_{\Kn}, \Kn^{-1}\sigma_s)$ is also admissible. From \cite{CS1, CS2, CS98}, it was shown that 
the albedo operator $\mathcal{A}$ is bounded from $L^1(\Gamma_-,d\xi)$ to $L^1(\Gamma_+,d\xi)$ equipped with the kernel   $\alpha(x,v,x',v')=(\mathcal{A}_1+\mathcal{A}_2+\mathcal{A}_3)(x,v,x',v')$, 
where  
\begin{align}
\mathcal{A}_1(x,v,x',v') &= e^{ -\int^{\tau_-(x,v)}_0 \sigma_{\Kn}(x-tv) dt} \delta_{x-\tau_-(x,v)v}(x') \delta(v-v'),\label{singular_A1}\\
\mathcal{A}_2(x,v,x',v') &=-\int^{\tau_-(x,v)}_0e^{ -\int^\eta_0 \sigma_{\Kn}(x-tv) dt -\int^{\tau_-(x-\eta v,v')}_0 \sigma_{\Kn}(x-\eta v-tv') dt  }\notag\\
& \hskip3.5cm k(x-\eta v,v',v) \delta_{x-\eta v-\tau_-(x-\eta v,v')v'}(x')d\eta, \label{kernel_A2}
\end{align}
and
\begin{align}\label{kernel_A3}
|n_{x'}\cdot v'|^{-1} & \mathcal{A}_3(x,v,x',v') \in L^\infty(\Gamma_-;L^1 (\Gamma_+,d\xi)).
\end{align} 
Here $d\xi(x,v)$ is defined in~\eqref{eqn:bdy_measure}. In addition, $\delta(x)$ is the delta function on $\R^n$ and $\delta_y(x)$ is the delta function on $\p\Omega$ defined by $(\delta_y,h)=h(y)$ for $h\in C^\infty_c(\R^n)$.
The travel time is denoted by $\tau_{\pm}(x,v)=\min\{t\geq 0: (x\pm tv,v)\in \Gamma_{\pm}\}$.

Notice that the kernel $\mathcal{A}_1$ is a singular distribution supported on the surface $x'=x-\tau_-(x,v)v $ and $v=v'$. One can apply the different degrees of singularities of $\mathcal{A}_j$, $j=1,2,3$, to distinguish $\mathcal{A}_1$ from the whole kernel $\alpha$. Then the information of $\sigma_{\Kn}$ can be extracted from $\mathcal{A}_1$. More precisely, the X-ray transform (defined in \eqref{def_Xray}) of $\sigma_{\Kn}$ will be first recovered from $\mathcal{A}_1$. Moreover, based on this, we could derive the stability estimate for $\sigma_{\Kn}$ with explicit coefficients' dependence on $\Kn$, see section \ref{stability_est}. 

We state the following lemma \ref{CSlemma} and lemma \ref{Wlemma}. Their proof can be found in \cite{CS98} and in \cite{Wang1999}, respectively. 
\begin{lemma}\label{CSlemma} Let $f\in L^1(\Omega\times V)$. Then
	\begin{align*}
	\int_{\Omega\times V} f(x,v)dxdv = \int_{\Gamma_\mp}\int^{\tau_\pm(x',v)}_0f(x'\pm tv,v)dtd\xi(x',v)\,.
	\end{align*} 
\end{lemma}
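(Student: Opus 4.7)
The statement is a Santaló-type coarea/change-of-variables identity: every phase-space point $(x,v)\in\Omega\times V$ can be tracked backwards along $-v$ (resp.\ forwards along $+v$) to a unique entry (resp.\ exit) point on $\partial\Omega$, and the corresponding parametrization has Jacobian $|n_{x'}\cdot v|$, which is precisely the weight hidden in $d\xi$.

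My plan is to establish the ``$-/+$'' case (the other is identical). Fix $v\in V$ and consider the map
\[
\Phi_v:\{(x',t):(x',v)\in\Gamma_-,\ 0<t<\tau_+(x',v)\}\to \Omega,\qquad \Phi_v(x',t)=x'+tv.
\]
By convexity of $\Omega$, every $x\in\Omega$ lies on exactly one line of the form $x'+tv$ with $x'$ the unique ingoing boundary point (namely $x'=x-\tau_-(x,v)v$, which belongs to $\Gamma_-$ by definition), and $t=\tau_-(x,v)\in(0,\tau_+(x',v))$. Thus $\Phi_v$ is a bijection onto $\Omega$ (up to a set of measure zero, namely those $v$ tangent to $\partial\Omega$, which is negligible since $\partial\Omega$ is $C^1$).

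Next I would compute the Jacobian of $\Phi_v$. Parametrizing $\partial\Omega$ locally near $x'$ by coordinates $(y_1,\dots,y_{n-1})$ such that $\{\partial_{y_i}x'\}$ forms an orthonormal frame of the tangent space (so that $d\mu(x')=dy_1\cdots dy_{n-1}$), the differential of $\Phi_v$ in the coordinates $(y,t)$ has columns $\partial_{y_1}x',\dots,\partial_{y_{n-1}}x',v$. Since the first $n-1$ vectors span the tangent space at $x'$, the determinant of this matrix equals, up to sign, the component of $v$ along the unit normal $n_{x'}$, i.e.\
\[
|\det D\Phi_v(y,t)|=|n_{x'}\cdot v|.
\]
Hence $dx=|n_{x'}\cdot v|\,d\mu(x')\,dt$ under the substitution.

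Finally I would apply Fubini/Tonelli (valid since $f\in L^1$) to write $\int_{\Omega\times V} f\,dx\,dv = \int_V\int_{\Omega}f(x,v)\,dx\,dv$, perform the change of variables $x=\Phi_v(x',t)$ in the inner integral, and then recognize $|n_{x'}\cdot v|\,d\mu(x')\,dv=d\xi(x',v)$ from \eqref{eqn:bdy_measure}. Swapping the order of integration back yields the asserted identity. The main technical step will be verifying the Jacobian computation globally (via a partition of unity covering $\Gamma_-$), together with confirming that the set of $(x,v)$ with $v$ tangent to $\partial\Omega$ at the foot point has measure zero so that $\Phi_v$ can legitimately be treated as a bijection onto a set of full measure.
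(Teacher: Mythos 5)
Your argument is correct: the change of variables $x = x'+tv$ with Jacobian $|n_{x'}\cdot v|$ (absorbed into $d\xi$), combined with Fubini and the observation that convexity makes $\Phi_v$ a bijection onto $\Omega$ up to a null set, is exactly the standard proof of this Santal\'o-type identity. The paper itself does not prove the lemma but defers to \cite{CS98}, and the argument given there is essentially the one you describe, so there is nothing to add.
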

\begin{lemma}\label{Wlemma} Let $f\in L^1(\Gamma_-,d\xi)$. Then
	\begin{align*}
	\int_{\Gamma_+} f(x-\tau_-(x,v)v,v) d\xi(x,v)= \int_{\Gamma_-} f(x',v)d\xi(x',v)\,.
	\end{align*} 
\end{lemma}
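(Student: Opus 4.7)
The plan is to exhibit a single auxiliary function $F$ on the phase space $\Omega\times V$ whose integral can be expressed in two different ways by applying Lemma \ref{CSlemma} under the two available decompositions (one anchored at $\Gamma_-$, the other at $\Gamma_+$). Equating the two expressions will yield the desired identity directly, without explicit Jacobian bookkeeping for the boundary map.

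Concretely, I would set
\begin{equation*}
F(x,v) \;=\; \frac{f\bigl(\phi(x,v),\,v\bigr)}{\tau_+\!\bigl(\phi(x,v),\,v\bigr)}, \qquad \phi(x,v) := x - \tau_-(x,v)v,
\end{equation*}
so that $\phi(x,v)$ is the (a.e.\ well-defined) point on $\Gamma_-$ obtained by tracing backward from $x\in\Omega$ along the direction $-v$. The key geometric observation is that $\phi$ and $\tau_+\circ\phi$ are constant along each chord of $\Omega$ in direction $v$: for $(x',v)\in\Gamma_-$ and $t\in(0,\tau_+(x',v))$ one has $\phi(x'+tv,v)=x'$ and $\tau_+(\phi(x'+tv,v),v)=\tau_+(x',v)$, and the analogous statement holds starting from $(x,v)\in\Gamma_+$ since $\tau_+(\phi(x,v),v)=\tau_-(x,v)$. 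A short preliminary check using the $\Gamma_-$ form of Lemma \ref{CSlemma} applied to $|F|$ gives $\|F\|_{L^1(\Omega\times V)}=\|f\|_{L^1(\Gamma_-,d\xi)}$, which legitimizes all that follows.

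Then I would apply Lemma \ref{CSlemma} twice. In the $\Gamma_-$ decomposition, the inner integral collapses to
\begin{equation*}
\int_0^{\tau_+(x',v)}\!\! F(x'+tv,v)\,dt \;=\; \int_0^{\tau_+(x',v)} \frac{f(x',v)}{\tau_+(x',v)}\,dt \;=\; f(x',v),
\end{equation*}
so $\int_{\Omega\times V}F\,dx\,dv = \int_{\Gamma_-}f(x',v)\,d\xi(x',v)$. In the $\Gamma_+$ decomposition, using $\phi(x-tv,v)=\phi(x,v)$ for $(x,v)\in\Gamma_+$ and $t\in(0,\tau_-(x,v))$, the inner integral collapses similarly to $f(\phi(x,v),v)$, giving $\int_{\Omega\times V}F\,dx\,dv = \int_{\Gamma_+}f(x-\tau_-(x,v)v,v)\,d\xi(x,v)$. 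The two right-hand sides must be equal, which is the stated identity.

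The main technical obstacle is handling the grazing set $\{(x,v)\in\partial\Omega\times V : n_x\cdot v=0\}$ and points where $\tau_\pm$ vanish, where $F$ is ill-defined. However, this set has $d\xi$-measure zero on $\Gamma_\pm$ and gives a zero-measure subset of $\Omega\times V$ under the $v$-foliation (for $\Omega$ open convex with $C^1$ boundary), so it plays no role in any of the integrals. A standard approximation (first assuming $f$ continuous with compact support in $\Gamma_-\setminus\{n_x\cdot v=0\}$, then extending by density in $L^1(\Gamma_-,d\xi)$) removes any remaining measurability concern; this is the only slightly delicate point, and it is where care must be taken to justify interchanging the integrations in Lemma \ref{CSlemma}.
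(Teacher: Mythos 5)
Your argument is correct. Note first that the paper does not actually prove Lemma \ref{Wlemma}; it defers to \cite{Wang1999} (and to \cite{CS98} for Lemma \ref{CSlemma}). The proof in those references is in essence the observation that the extension $u(x,v)=f(x-\tau_-(x,v)v,v)$ is constant along characteristics, so that integrating $v\cdot\nabla_x u=0$ over $\Omega\times V$ and applying the divergence theorem identifies $\int_{\Gamma_+}u\,d\xi$ with $\int_{\Gamma_-}u\,d\xi$; equivalently, the backward exit map $(x,v)\mapsto(x-\tau_-(x,v)v,v)$ is a $d\xi$-preserving bijection of $\Gamma_+$ onto $\Gamma_-$ up to null sets. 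Your route reaches the same conclusion without the divergence theorem, by feeding the chord-normalized function $F=f(\phi(\cdot,v),v)/\tau_+(\phi(\cdot,v),v)$ into both decompositions of Lemma \ref{CSlemma}; since $\phi$ and the full chord length are constant along each chord, both inner integrals collapse exactly as you claim, and the two evaluations of $\int_{\Omega\times V}F$ give the two sides of the identity. This is a clean, self-contained derivation from the one lemma the paper already quotes, and I verified the two key facts it rests on: $\phi(x'+tv,v)=x'$ with $\tau_+(\phi(x'+tv,v),v)=\tau_+(x',v)$ for $(x',v)\in\Gamma_-$, and $\phi(x-tv,v)=\phi(x,v)$ with $\tau_+(\phi(x,v),v)=\tau_-(x,v)$ for $(x,v)\in\Gamma_+$. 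Two small points of hygiene: your preliminary integrability check applies Lemma \ref{CSlemma} to $|F|$ before knowing $|F|\in L^1(\Omega\times V)$, which is circular as literally stated; you should invoke the nonnegative (Tonelli-type) version of that identity, valid for measurable $g\geq 0$ with both sides possibly infinite, which is how such formulas are proved in \cite{CS98} anyway. Also, the denominator $\tau_+(\phi(x,v),v)=\tau_-(x,v)+\tau_+(x,v)$ is strictly positive for every interior $x\in\Omega$ when $\Omega$ is open and convex, so the worry about division by zero concerns only measurability on a null set and your approximation remark is more than enough. Neither point is a gap.
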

These identities will play an crucial role in the derivation of the stability estimate for the optical parameters. In particular, lemma \ref{Wlemma} implies that integrals on the space $\Gamma_+$ and on the space $\Gamma_-$ are the same under suitable change of variables. It will be applied in lemma \ref{A1est} in order to transform the integral over the outgoing space $\Gamma_+$ into the integral over the incoming space $\Gamma_-$ such that the X-ray transform of $\sigma_{\Kn}$ can be recovered from the first kernel $\mathcal{A}_1$. 
As for lemma \ref{CSlemma}, it gives a way to compute the integral over $\Omega\times V$ by using the line and surface integrals, and vice versa.

\section{Analysis of the Knudsen number}\label{stability_est}
In this section, we study the stability estimate of the absorption and the scattering coefficient and investigate their dependence on the Kundsen number. 
We start by analyzing the total absorption coefficient $\sigma_{\Kn}=\Kn\sigma_a+\Kn^{-1}\sigma_s$ in \eqref{sigma_a}. The analysis technique is based on \cite{CS3, CS98, Wang1999} with suitable adjustments to our setting. 
\subsection{Stability estimate of the total absorption coefficient $\sigma_{\Kn}$}

Assume that the function $\psi \in C^\infty_0(\R^n)$ satisfies $0\leq \psi \leq 1$, $\psi (0)=1$ and $\int \psi  dx=1$.  
Let $(x_0',v_0')\in \Gamma_-$ and $\varepsilon>0$. We denote the functions	
$$
    \psi_{v_0'}^\varepsilon (v) = \varepsilon^{-n} \psi\LC {v-v'_0\over \varepsilon}\RC.
$$
We also choose functions $\phi_{x_0'}^\varepsilon$ in spatial dimensions such that $0\leq \phi_{x_0'}^\varepsilon(x)\in C^\infty_0(\R^n)$ and  $\supp\phi_{x_0'}^\varepsilon(x )\in B^\varepsilon(x_0')\cap \p\Omega$. Moreover, $\int_{\p\Omega} \phi_{x_0'}^\varepsilon(x)d\mu(x)=1$ and
$$
    \lim_{\varepsilon\rightarrow 0} \int_{\p\Omega}f(x)\phi^\varepsilon_{x_0'}(x)d\mu(x) = f(x_0')
$$
for any function $f$ in $C^0(\p\Omega).$
For $(x_0',v_0')\in \Gamma_-$, if $\varepsilon>0$ is sufficiently small, then one has
$$
    \supp \phi^\varepsilon_{x_0'}(x')\times \supp \psi^\varepsilon_{v_0'}(v')\subset \Gamma_-.
$$
Next, we denote the smooth cut off function $f^\varepsilon_{x_0',v_0'}$ on $\p\Omega$ and the velocity space by
\begin{align}\label{f}
    f^\varepsilon_{x_0',v_0'}( x',v')=|n_{x'}\cdot v'|^{-1} \phi^\varepsilon_{x_0'}(x')\psi^\varepsilon_{v_0'}(v') .
\end{align}
From a direct computation, one has $f^\varepsilon_{x_0',v_0'}\in L^1(\Gamma_-,d\xi)$. In particular, 
\begin{equation}\label{eqn:int_f}
\|f^\varepsilon_{x_0',v_0'}\|_{L^1(\Gamma_-,d\xi)} = \int_{\Gamma_-} |f^\varepsilon_{x_0',v_0'}(x',v')|   |n_{x'}\cdot v'|d\mu(x')dv'=1.
\end{equation}
Before studying the kernel, for $(x_0',v_0')\in \Gamma_-$, we define another cut off function on $\Gamma_+$ by
$$
\tilde{\chi}^\varepsilon(x,v):=\chi^\varepsilon(x-\tau_-(x,v)v,v) = \chi^{1,\varepsilon}_{x_0'}(x-\tau_-(x,v)v)\chi^{2,\varepsilon}_{v_0'}(v) 
$$
for any $(x ,v )$ in $\Gamma_+$, where $\chi^{1,\varepsilon}$ and $\chi^{2,\varepsilon}$ satisfy
\begin{align*}
     \left\{ \begin{array}{cl}
        \chi^{1,\varepsilon}(x) =1 &  \hbox{in }B^\varepsilon(x_0')\cap \p\Omega,\\
        \chi^{1,\varepsilon} (x) =0 & \hbox{in }\R^n  \setminus (B^\varepsilon(x_0')\cap \p\Omega),\\
            \end{array}  \right.
\end{align*}
and 
\begin{align*}
     \left\{ \begin{array}{cl}
        \chi^{2,\varepsilon}(v) =1 &  \hbox{in }  \supp \psi^\varepsilon_{v_0'}(v),\\
        \chi^{2,\varepsilon}(v)=0 & \hbox{in } V \setminus  \supp \psi^\varepsilon_{v_0'}(v).\\
            \end{array}  \right.
\end{align*}

The main goal of this section is to extract the information of $\sigma_{\Kn}-\tilde{\sigma}_{\Kn}$ from the measurements $\mathcal{A}-\tilde{\mathcal{A}}$. Let $\alpha$ and $\tilde{\alpha}$ be the distribution kernel for $\mathcal{A}$ and $\tilde{\mathcal{A}}$, respectively. We apply the cut off function on the albedo operator and then estimate the function
\begin{align*}
    &\tilde{\chi}^\varepsilon (\mathcal{A}-\tilde{\mathcal{A}}) f_{x_0',v_0'}^\varepsilon(x,v) \\
    &= \tilde{\chi}^\varepsilon(x,v)\int_{\Gamma_-} (\alpha(x,v,x',v')-\tilde{\alpha}  (x,v,x',v')) f_{x_0',v_0'}^\varepsilon(x',v')d\mu(x')dv',
\end{align*}
for $(x,v)\in \Gamma_+$.
In particular, we will estimate each term in the right hand side of \eqref{A3} which corresponds to $\mathcal{A}_j$, $j=1,2,3$, respectively:
\begin{align}\label{A3}
\|\tilde{\chi}^\varepsilon (\mathcal{A}-\tilde{\mathcal{A}}) f_{x_0',v_0'}^\varepsilon \|_{L^1(\Gamma_+,d\xi)} = \left\|\sum_{j=1}^3 \tilde{\chi}^\varepsilon \int_{\Gamma_-}(\mathcal{A}_j-\tilde{\mathcal{A}}_j) f_{x_0',v_0'}^\varepsilon(x',v') d\mu(x')dv'\right\|_{L^1(\Gamma_+,d\xi)}.
\end{align} 
We start by considering the following estimate.
\begin{lemma}\label{A1est} For $\varepsilon>0$, $f^\varepsilon_{x_0',v_0'}$ is defined as in \eqref{f}. Then
\begin{align*}
    &\lim_{\varepsilon\rightarrow 0}\LN \tilde\chi^\varepsilon \int_{\Gamma_-}  (\mathcal{A}_1-\tilde{\mathcal{A}}_1) f_{x_0',v_0'}^\varepsilon(x',v')d\mu(x')dv'\RN_{L^1(\Gamma_+,d\xi)} \\
    &= \LV e^{-\int_{\R} \sigma_{\Kn}(x'_0+tv_0') dt} -e^{-\int_{\R} \tilde{\sigma}_{\Kn}(x_0'+tv_0') dt}\RV.
\end{align*}
\end{lemma}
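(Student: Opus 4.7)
The plan is to compute $\int_{\Gamma_-}\mathcal{A}_1 f^\varepsilon_{x_0',v_0'}\,d\mu(x')\,dv'$ in closed form by exploiting the two delta factors in the formula for $\mathcal{A}_1$: the factor $\delta_{x-\tau_-(x,v)v}(x')$ freezes the boundary point $x'$ at the entry point $x-\tau_-(x,v)v$, and $\delta(v-v')$ eliminates the velocity integral. The resulting quantity is $E_{\sigma_{\Kn}}(x,v)\,f^\varepsilon_{x_0',v_0'}(x-\tau_-(x,v)v,\,v)$, where I write $E_{\sigma_{\Kn}}(x,v):=\exp\!\big(-\int_0^{\tau_-(x,v)}\sigma_{\Kn}(x-tv)\,dt\big)$, and similarly for $\tilde{\sigma}_{\Kn}$. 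Thus the object whose $L^1(\Gamma_+,d\xi)$-norm is to be taken is
\begin{align*}
\tilde{\chi}^\varepsilon(x,v)\,\bigl(E_{\sigma_{\Kn}}(x,v)-E_{\tilde{\sigma}_{\Kn}}(x,v)\bigr)\,f^\varepsilon_{x_0',v_0'}\bigl(x-\tau_-(x,v)v,v\bigr).
\end{align*}

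Next I would change the variables of integration from $\Gamma_+$ to $\Gamma_-$ via Lemma~\ref{Wlemma}. Under the substitution $x'=x-\tau_-(x,v)v$, the cutoff $\tilde{\chi}^\varepsilon(x,v)$ becomes exactly $\chi^{1,\varepsilon}_{x_0'}(x')\chi^{2,\varepsilon}_{v_0'}(v)$ by its very definition, while reparametrizing $t\mapsto \tau_-(x,v)-t$ recasts the exponent as a forward line integral emanating from the entry point:
\begin{align*}
E_{\sigma_{\Kn}}(x,v)=\exp\!\Big(-\int_0^{\tau_+(x',v)}\sigma_{\Kn}(x'+sv)\,ds\Big).
\end{align*}
The explicit form $f^\varepsilon_{x_0',v_0'}(x',v)=|n_{x'}\cdot v|^{-1}\phi^\varepsilon_{x_0'}(x')\psi^\varepsilon_{v_0'}(v)$ cancels the factor $|n_{x'}\cdot v|$ in $d\xi(x',v)$, and on $\supp(\phi^\varepsilon_{x_0'})\times\supp(\psi^\varepsilon_{v_0'})$ the cutoffs $\chi^{1,\varepsilon}$ and $\chi^{2,\varepsilon}$ are identically $1$. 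These reductions collapse the $L^1(\Gamma_+,d\xi)$-norm to
\begin{align*}
\int_{\Gamma_-}\Bigl|e^{-\int_0^{\tau_+(x',v)}\sigma_{\Kn}(x'+sv)\,ds}-e^{-\int_0^{\tau_+(x',v)}\tilde{\sigma}_{\Kn}(x'+sv)\,ds}\Bigr|\,\phi^\varepsilon_{x_0'}(x')\psi^\varepsilon_{v_0'}(v)\,d\mu(x')\,dv.
\end{align*}

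To finish, I would recognize $\phi^\varepsilon_{x_0'}$ and $\psi^\varepsilon_{v_0'}$ as approximate identities concentrating at $x_0'\in\partial\Omega$ and $v_0'\in V$. Since $\sigma_{\Kn},\tilde{\sigma}_{\Kn}$ lie in $\mathcal{P}\subset H^{3/2+r'}\hookrightarrow C^0$, the exponent depends continuously on $(x',v)$, so the integrand is continuous at $(x_0',v_0')$ and bounded by $1$; hence bounded convergence lets us pass $\varepsilon\to 0$ to recover the integrand's value at $(x_0',v_0')$. Since $\sigma_a,\sigma_s$ are compactly supported inside $\Omega$ (so the line integrates to $0$ outside $[0,\tau_+(x_0',v_0')]$), the line integral from $x_0'$ in direction $v_0'$ equals the full integral over $\mathbb{R}$, producing the claimed right-hand side.

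The main obstacle is not a deep estimate but careful bookkeeping: performing the distributional pairings against $\delta_{x-\tau_-(x,v)v}$ and $\delta(v-v')$ on the correct measure spaces, verifying the cancellation $|n_{x'}\cdot v|^{-1}\cdot|n_{x'}\cdot v|$ between $f^\varepsilon_{x_0',v_0'}$ and $d\xi$, and checking the identification between $\tilde{\chi}^\varepsilon$ on $\Gamma_+$ and $\chi^{1,\varepsilon}\chi^{2,\varepsilon}$ on $\Gamma_-$ under the change of variables of Lemma~\ref{Wlemma}. Once these are in place, the $\varepsilon\to 0$ limit is the usual approximate-identity argument.
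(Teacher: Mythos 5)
Your proposal is correct and follows essentially the same route as the paper's own proof: pair the delta factors in $\mathcal{A}_1$ against $f^\varepsilon_{x_0',v_0'}$, transfer the $L^1(\Gamma_+,d\xi)$ integral to $\Gamma_-$ via Lemma~\ref{Wlemma}, extend the line integral to $\R$ using the compact support of the coefficients, and conclude with the approximate-identity limit using $\|f^\varepsilon_{x_0',v_0'}\|_{L^1(\Gamma_-,d\xi)}=1$. Your added bookkeeping (the cancellation of $|n_{x'}\cdot v|^{-1}$ against $d\xi$, the identification of $\tilde{\chi}^\varepsilon$ with $\chi^{1,\varepsilon}\chi^{2,\varepsilon}$, and the continuity of the exponent via the Sobolev embedding) only makes explicit what the paper leaves implicit.
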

\begin{proof}
From the definition of the kernels $A_1$ and $\tilde{A}_1$, one has
\begin{align*}
   & \int_{\Gamma_-} (\mathcal{A}_1-\tilde{\mathcal{A}}_1)(x,v,x',v')f_{x_0',v_0'}^\varepsilon(x',v')d\mu(x') dv'\\
    &= \LC e^{-\int^{\tau_-(x,v)}_0 \sigma_{\Kn}(x-tv) dt} -e^{-\int^{\tau_-(x,v)}_0 \tilde{\sigma}_{\Kn}(x-tv) dt} \RC f^\varepsilon_{x_0',v_0'}(x -\tau_-(x,v)v, v).
\end{align*}  
Since $\sigma_{\Kn}$ and $\tilde{\sigma}_{\Kn}$ are supported in $\Omega$, the integration range can be extended to $\R$. 
Thus, we obtain
\begin{align}\label{A1}
   &\LN\tilde\chi^\varepsilon \int_{\Gamma_-}  (\mathcal{A}_1-\tilde{\mathcal{A}}_1) f_{x_0',v_0'}^\varepsilon(x',v')d\mu(x')dv'\RN_{L^1(\Gamma_+,d\xi)} \notag \\
   &=   \int_{\Gamma_+}\tilde{\chi}^\varepsilon(x,v)  \LV e^{-\int_{\R} \sigma_{\Kn}(x-\tau_-(x,v)v+tv) dt} -e^{-\int_{\R} \tilde{\sigma}_{\Kn}(x-\tau_-(x,v)v+tv) dt}\RV  \notag\\
   &\quad\quad  f^\varepsilon_{x_0',v_0'}(x -\tau_-(x,v)v, v)d\xi(x,v) \notag\\
  & =  \int_{\Gamma_-}  \chi^\varepsilon(y,v)\LV e^{-\int_{\R} \sigma_{\Kn}(y+tv) dt} -e^{-\int_{\R} \tilde{\sigma}_{\Kn}(y+tv) dt} \RV f^\varepsilon_{x_0',v_0'}(y, v)d\xi(y,v) .
\end{align}  
Here the last identity holds by applying lemma \ref{Wlemma} and $f^\varepsilon_{x_0',v_0'} \in L^1(\Gamma_-,d\xi)$. 
Moreover, from the definition of $\chi^\varepsilon$, one has $\chi^\varepsilon$ is compactly supported and $\chi^\varepsilon=1$ in $(B^\varepsilon(x_0')\cap\p\Omega) \times \supp\psi^\varepsilon_{v_0'}$. We apply the properties of the function $f^\varepsilon_{x_0',v_0'}$ with \eqref{eqn:int_f} and then, by taking the limit $\varepsilon\rightarrow 0$ on the identity \eqref{A1}, we conclude that
\begin{align*}
  &\LN \tilde\chi^\varepsilon\int_{\Gamma_-}  (\mathcal{A}_1-\tilde{\mathcal{A}}_1) f_{x_0',v_0'}^\varepsilon(x',v')d\mu(x')dv'\RN_{L^1(\Gamma_+,d\xi)} \\
  &=\int_{\Gamma_-}  \LV e^{-\int_{\R} \sigma_{\Kn}(y+tv) dt} -e^{-\int_{\R} \tilde{\sigma}_{\Kn}(y+tv) dt} \RV f^\varepsilon_{x_0',v_0'}(y,v)d\xi(y,v)\\ &\rightarrow  \LV e^{-\int_{\R} \sigma_{\Kn}(x_0'+tv_0') dt} -e^{-\int_{\R} \tilde{\sigma}_{\Kn}(x_0'+tv_0') dt} \RV.
\end{align*} 
This finishes the proof.
\end{proof}

For the remaining two terms in \eqref{A3}, we have the following identities.
\begin{lemma}\label{A23} For $\varepsilon>0$, $f^\varepsilon_{x_0',v_0'}$ is defined in \eqref{f}. Then
\begin{align*}
    \lim_{\varepsilon\rightarrow 0} \LN \tilde\chi^\varepsilon\int_{\Gamma_-}  (\mathcal{A}_j-\tilde{\mathcal{A}}_j) f_{x_0',v_0'}^\varepsilon(x',v')d\mu(x')dv'\RN_{L^1(\Gamma_+,d\xi)} = 0,\ \ j=2,3.
\end{align*}
\end{lemma}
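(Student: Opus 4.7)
My plan is to show $\lim_{\varepsilon\to 0}\|\tilde\chi^\varepsilon\int\mathcal{A}_j f^\varepsilon_{x_0',v_0'}\|_{L^1(\Gamma_+,d\xi)}=0$ separately for $j=2$ and $j=3$, and likewise for $\tilde{\mathcal{A}}_j$; the triangle inequality then yields the claim. The two cases require different mechanisms because $\mathcal{A}_2$ still carries a surface delta in $x'$ whereas $\mathcal{A}_3$ is already regular in the sense of \eqref{kernel_A3}. In both cases I would bound the exponential attenuation factor by $1$ and $|k| = \Kn^{-1}\sigma_s$ by $\Kn^{-1}\|\sigma_s\|_{L^\infty}$, isolating the singular geometric content of each kernel.

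For $j = 3$, writing $K := |n_{x'}\cdot v'|^{-1}\mathcal{A}_3 \in L^\infty(\Gamma_-; L^1(\Gamma_+,d\xi))$ and applying Fubini gives
\[
\LN\tilde\chi^\varepsilon\int\mathcal{A}_3 f^\varepsilon_{x_0',v_0'}d\mu(x')dv'\RN_{L^1(\Gamma_+,d\xi)} \leq \int_{\Gamma_-}\phi^\varepsilon(x')\psi^\varepsilon(v')g_\varepsilon(x',v')\,d\mu(x')dv',
\]
where $g_\varepsilon(x',v') := \int_{\Gamma_+}\tilde\chi^\varepsilon(x,v)|K(x,v,x',v')|\,d\xi(x,v)$. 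For each fixed $(x',v')$ near $(x_0',v_0')$, $\tilde\chi^\varepsilon$ is supported on a $d\xi$-set whose measure tends to $0$, while $|K(\cdot,\cdot,x',v')|\in L^1(\Gamma_+,d\xi)$; dominated convergence then yields $g_\varepsilon(x',v')\to 0$. Together with the uniform bound $g_\varepsilon\leq\|K\|_{L^\infty(\Gamma_-;L^1(\Gamma_+))}$, an Egorov-type argument on a compact neighborhood of $(x_0',v_0')$ combined with the normalization $\int\phi^\varepsilon\psi^\varepsilon\,d\mu\,dv'=1$ forces the outer integral to $0$.

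For $j = 2$, substituting the surface delta from \eqref{kernel_A2} and swapping the order of integration via Lemma \ref{CSlemma} recasts the $d\xi(x,v)\,d\eta$-integral on $\{(x,v)\in\Gamma_+,\,\eta\in[0,\tau_-(x,v)]\}$ as a $dy\,dv$-integral on $\Omega\times V$ via $y = x - \eta v$. Under this change, the outer cut-off rewrites as $\tilde\chi^\varepsilon(x,v) = \chi^{1,\varepsilon}(y-\tau_-(y,v)v)\chi^{2,\varepsilon}(v)$, while the $\mathcal{A}_2$-delta factor becomes $\phi^\varepsilon(y-\tau_-(y,v')v')$. For each fixed $v'\in\supp\psi^\varepsilon$, the admissible $(y,v)$ then lie in the intersection of $\{v\in B(v_0',c\varepsilon)\}$ (of $dv$-measure $\sim\varepsilon^n$) with the thin backward-ray tube $\{y\in\Omega:\,y-\tau_-(y,v)v\in B^\varepsilon(x_0')\}$ (of $dy$-measure $\sim\varepsilon^{n-1}$ for each $v$); since $|n_{x''}\cdot v'|^{-1}$ is bounded away from the grazing set (which we ensure by taking $(x_0',v_0')$ in the interior of $\Gamma_-$) and $\int\psi^\varepsilon\,dv'=1$, we obtain a bound of order $\varepsilon^{2n-1}$, which vanishes as $\varepsilon\to 0$.

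The main obstacle is the $j=3$ case: the plain volume-shrinkage bookkeeping that closes $j=2$ is unavailable, and the delicate interplay between the concentrating mollifier $\phi^\varepsilon\psi^\varepsilon$ and the pointwise-vanishing but only uniformly bounded $g_\varepsilon$ forces one to pass through Egorov's theorem rather than direct dominated convergence, in order to rule out a small exceptional set on which the two singular families could accumulate.
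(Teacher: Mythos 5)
Your overall strategy (triangle inequality, treating $j=2$ and $j=3$ by different mechanisms, bounding the exponential factors by $1$ and $k$ by $\Kn^{-1}\|\sigma_s\|_{L^\infty}$) matches the paper's, but the $j=3$ argument has a genuine gap exactly at the point you flag as delicate. After Fubini you are left with $\int_{\Gamma_-}\phi^\varepsilon\psi^\varepsilon\,g_\varepsilon\,d\mu\,dv'$, where $g_\varepsilon\to 0$ pointwise and $g_\varepsilon\le\|K\|_{L^\infty(\Gamma_-;L^1(\Gamma_+,d\xi))}$; but what you actually need is $\sup\{g_\varepsilon(x',v'):(x',v')\in\supp\phi^\varepsilon\times\supp\psi^\varepsilon\}\to 0$, i.e.\ uniform smallness on a \emph{shrinking} neighborhood of $(x_0',v_0')$. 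Egorov's theorem only gives uniform convergence off an exceptional set of small measure, and it says nothing about where that set sits: since the mollifier concentrates on sets whose measure also tends to zero, those concentration sets may lie inside the exceptional set for every $\varepsilon$. To force $\sup_{B_\varepsilon}g_\varepsilon\to0$ you would need uniform integrability of the family $\{K(\cdot,\cdot,x',v')\}$ over $(x',v')$ near $(x_0',v_0')$, which the hypothesis $K\in L^\infty(\Gamma_-;L^1(\Gamma_+,d\xi))$ does not supply. The paper takes the two operations in the opposite order: it first bounds the inner $\Gamma_-$-integral pointwise in $(x,v)$ by $\sup_{(x',v')}|n_{x'}\cdot v'|^{-1}|(\mathcal{A}_3-\tilde{\mathcal{A}}_3)(x,v,x',v')|$ using $\|f^\varepsilon_{x_0',v_0'}\|_{L^1(\Gamma_-,d\xi)}=1$, and only then integrates over $\Gamma_+$ against $\tilde\chi^\varepsilon$, where dominated convergence applies because the $d\xi$-measure of $\supp\tilde\chi^\varepsilon$ shrinks. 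You should restructure your $j=3$ step along these lines.

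For $j=2$ your geometric picture is essentially right but the bookkeeping is off: the claimed bound of order $\varepsilon^{2n-1}$ ignores the amplitude of the spatial mollifier. Since $\int_{\p\Omega}\phi^\varepsilon\,d\mu=1$ with $\supp\phi^\varepsilon\subset B^\varepsilon(x_0')\cap\p\Omega$, one has $\|\phi^\varepsilon\|_{L^\infty}\gtrsim\varepsilon^{-(n-1)}$, which exactly cancels the $\varepsilon^{n-1}$ measure of the backward-ray tube; the surviving smallness comes only from the velocity cut-off, giving $O(\varepsilon^{n})$ rather than $O(\varepsilon^{2n-1})$. This is in fact the paper's mechanism: after two applications of Lemma \ref{CSlemma} the integral factorizes into $\frac{\diam(\Omega)}{M_1}\Kn^{-1}\|\sigma_s+\tilde\sigma_s\|_{L^\infty}\cdot\|f^\varepsilon_{x_0',v_0'}\|_{L^1(\Gamma_-,d\xi)}\cdot\int_{\supp\psi^\varepsilon_{v_0'}}dv$, where the first two factors are bounded (the $L^1$ norm equals $1$) and only the last factor vanishes. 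The conclusion for $j=2$ still holds, but the rate you state is not correct and the argument should credit the smallness to the $dv$-integral of $\chi^{2,\varepsilon}$ alone.
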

\begin{proof}
We first study the case $j=2$. Based on the definition of the kernel $\mathcal{A}_2$ in \eqref{kernel_A2}, the delta function $\delta_{x-\eta v-\tau_-(x-\eta v,v')v'}(x')$ acts on the function $f_{x_0',v_0'}^\varepsilon(x',v')$ will take the value $f^\varepsilon_{x_0',v_0'}(x -\tau_-(x-\eta v,v')v'-\eta v, v')$. Then one has
    \begin{align*} 
       & \LN \tilde{\chi}^\varepsilon\int_{\Gamma_-}  (\mathcal{A}_2-\tilde{\mathcal{A}}_2) f_{x_0',v_0'}^\varepsilon(x',v')d\mu(x')dv' \RN_{L^1(\Gamma_+,d\xi)}  \notag\\
       &= \int_{\Gamma_+}\tilde\chi^\varepsilon(x,v) \Big| \int_V\int_0^{\tau_-(x,v)} (\Gamma-\tilde{\Gamma})(x,v,x',v')\notag \\
       &\quad\quad f^\varepsilon_{x_0',v_0'}(x -\tau_-(x-\eta v,v')v'-\eta v, v') d\eta dv' \Big| d\xi(x,v) \,,
    \end{align*}  
where we denote 
$$ 
\Gamma (x,v,x',v') = \Kn^{-1} e^{-\int^\eta_0 \sigma_{\Kn}(x-tv)dt - \int^{\tau_-(x-\eta v,v')}_0 \sigma_{\Kn}(x-\eta v-tv')dt} \sigma_s(x- \eta v),
$$ 
and 
$$
\tilde\Gamma(x,v,x',v') = \Kn^{-1} e^{-\int^\eta_0 \tilde\sigma_{\Kn}(x-tv)dt - \int^{\tau_-(x-\eta v,v')}_0 \tilde\sigma_{\Kn}(x-\eta v-tv')dt} \tilde\sigma_s(x-\eta v).
$$
Note that since $\sigma_s$ and $\tilde\sigma_s$ are nonnegative, it gives $$|\Gamma|\leq \Kn^{-1} \sigma_s(x- \eta v), \ \ \ |\tilde\Gamma|\leq \Kn^{-1} \tilde\sigma_s(x-\eta v) .$$
We interchange the integration order by using Fubini's theorem, and lemma \ref{CSlemma},  
we have 
  \begin{align}\label{id3}
       &\LN \tilde{\chi}^\varepsilon\int_{\Gamma_-} (\mathcal{A}_2-\tilde{\mathcal{A}}_2) f_{x_0',v_0'}^\varepsilon(x',v')d\mu(x')dv'\RN_{L^1(\Gamma_+,d\xi)}  \notag\\
       &\leq    \int_{\Gamma_+} \int_V\int_0^{\tau_-(x,v)}    \chi^{2,\varepsilon}( v) \Kn^{-1} (\sigma_s+\tilde{\sigma}_s)(x-\eta v)  \notag\\
       &\quad\quad f^\varepsilon_{x_0',v_0'}(x -\eta v -\tau_-(x-\eta v,v')v', v') d\eta dv' d\xi(x,v)  \notag \\
         & =  \int_{V} \int_{\Omega\times V}  \chi^{2,\varepsilon}( v) \Kn^{-1} (\sigma_s+\tilde{\sigma}_s)(x)  
           f^\varepsilon_{x_0',v_0'}(x -\tau_-(x,v')v', v') dxdv dv' .
\end{align}
Using lemma \ref{CSlemma} again and the bounded condition for $v$ described in \eqref{eqn:v_bound}, it leads to
\begin{align}\label{A2:inquality} 
        &\int_{V} \chi^{2,\varepsilon}( v) \Big( \int_{\Omega\times V}    \Kn^{-1} (\sigma_s+\tilde{\sigma}_s)(x)  
        f^\varepsilon_{x_0',v_0'}(x -\tau_-(x,v')v', v') dxdv'\Big) dv \notag\\ 
        &=  \int_{V} \chi^{2,\varepsilon}( v) \Big( \int_{\Gamma_-}\int_0^{\tau_+(x',v')}    \Kn^{-1} (\sigma_s+\tilde{\sigma}_s)(x'+t v') 
        f^\varepsilon_{x_0',v_0'}(x', v') dt d\xi(x',v')\Big) dv \notag\\
        &\leq  {\diam(\Omega) \over M_1}\Kn^{-1}\|(\sigma_s+\tilde{\sigma}_s)\|_{L^\infty(\Omega)} \Big(  \int_{\Gamma_-} f^\varepsilon_{x_0',v_0'}(x', v')  d\xi(x',v')\Big) \Big( \int_{\supp \psi^\varepsilon_{v_0'}(v)}dv\Big).
\end{align}  
The last component of the above equality has the measure of $\supp \psi^\varepsilon_{v_0'}(v)$ goes to $0$ when $\varepsilon\rightarrow 0$. It implies that the right hand side of \eqref{A2:inquality} converges to zero, and thus we obtain the conclusion of the Lemma for $j=2$.

Now we will turn to the term with kernel $\mathcal{A}_3$ and $\tilde{\mathcal{A}}_3$. From \eqref{kernel_A3}, they satisfy $$|n_{x'}\cdot v'|^{-1}\mathcal{A}_3,\ |n_{x'}\cdot v'|^{-1}\tilde{\mathcal{A}}_3 \in L^\infty(\Gamma_-;L^1(\Gamma_+,d\xi)).$$ Thus, the limit of 
  \begin{align}\label{id2}
       &\LN \tilde\chi^\varepsilon \int_{\Gamma_-}(\mathcal{A}_3-\tilde{\mathcal{A}}_3) f_{x_0',v_0'}^\varepsilon(x',v') d\mu(x')dv'\RN_{L^1(\Gamma_+,d\xi)}  \notag\\
       &=   \int_{\Gamma_+}\tilde\chi^\varepsilon(x,v) \LV\int_{\Gamma_-} (\mathcal{A}_3-\tilde{\mathcal{A}}_3)(x,v,x',v') f^\varepsilon_{x_0',v_0'}(x',v') d\mu(x')dv'\RV d\xi(x,v)  \notag\\
       &\leq  \int_{\Gamma_+} \tilde\chi^\varepsilon(x,v) \int_{\Gamma_-} |n_{x'}\cdot v'|^{-1}|(\mathcal{A}_3-\tilde{\mathcal{A}}_3)(x,v,x',v')|f^\varepsilon_{x_0',v_0'}(x',v')d\xi(x',v') d\xi(x,v) \notag\\
       &\leq  \int_{\Gamma_+} \tilde\chi^\varepsilon(x,v)  \sup_{(x',v')\in \Gamma_-}|n_{x'}\cdot v'|^{-1}|(\mathcal{A}_3-\tilde{\mathcal{A}}_3)(x,v,x',v')|  d\xi(x,v) 
    \end{align}  
goes to $0$ as $\varepsilon\rightarrow 0$ by applying dominated convergence theorem and the fact that the measure of support of $\tilde{\chi}^\varepsilon$ converges to zero as $\varepsilon \rightarrow 0$. This completes the proof of this lemma.
\end{proof}

From the equation \eqref{A3}, we have the estimate for the term containing $\mathcal{A}_1-\tilde{\mathcal{A}_1}$:
\begin{align*}
    &  \LN \tilde\chi^\varepsilon \int_{\Gamma_-}   (\mathcal{A}_1-\tilde{\mathcal{A}}_1) f_{x_0',v_0'}^\varepsilon d\mu(x')dv'\RN_{L^1(\Gamma_+,d\xi)} \\
   &\leq    \|  \tilde{\chi}^\varepsilon (\mathcal{A}-\tilde{\mathcal{A}}) f_{x_0',v_0'}^\varepsilon \|_{L^1(\Gamma_+,d\xi)} +\sum_{j=2,3} \LN \tilde\chi^\varepsilon \int_{\Gamma_-}   (\mathcal{A}_j-\tilde{\mathcal{A}}_j) f_{x_0',v_0'}^\varepsilon d\mu(x')dv'\RN_{L^1(\Gamma_+,d\xi)}.
\end{align*}
Let $\varepsilon$ go to $0$, then by lemma \ref{A1est} and lemma \ref{A23}, this leads to
\begin{align*}
\LV e^{-\int_{\R} \sigma_{\Kn}(x_0'+tv_0') dt} -e^{-\int_{\R} \tilde{\sigma}_{\Kn}(x_0'+tv_0') dt} \RV
   &\leq  \lim_{\varepsilon\rightarrow 0}   \|\tilde\chi^\varepsilon(\mathcal{A}-\tilde{\mathcal{A}}) f_{x_0',v_0'}^\varepsilon \|_{L^1(\Gamma_+,d\xi)} \\
& \leq \| (\mathcal{A}-\tilde{\mathcal{A}}) f_{x_0',v_0'}^\varepsilon \|_{L^1(\Gamma_+,d\xi)}  \\
& \leq \| \mathcal{A}-\tilde{\mathcal{A}} \|_{*}\| f_{x_0',v_0'}^\varepsilon \|_{L^1(\Gamma_-,d\xi)}\\
& =\| \mathcal{A}-\tilde{\mathcal{A}}\|_{*} ,
\end{align*}
where we use the fact that $\| f_{x_0',v_0'}^\varepsilon\|_{L^1(\Gamma_-,d\xi)}=1$ in the last identity.
Therefore, applying the mean value theorem on the left hand side of the above inequalities, it follows that
\begin{align}\label{inequality}
\| \mathcal{A}-\tilde{\mathcal{A}}\|_{*} 
&\geq e^{-\beta_{\Kn}	} \LV \int_{\R} \sigma_{\Kn}(x_0'+tv_0') - \tilde{\sigma}_{\Kn}(x_0'+tv_0') dt\RV,
\end{align}
where one can deduce the constant bound $\beta_{\Kn}$ by using once again the boundedness of $v$ in~\eqref{eqn:v_bound} as follows:
\begin{align}\label{beta_kn}
\beta_{\Kn}={\diam(\Omega) } M_1^{-1} \LC \Kn (\| \sigma_a\|_{L^\infty}+\|\tilde{\sigma}_a\|_{L^\infty}) +\Kn^{-1} (\| \sigma_s\|_{L^\infty}+\|\tilde{\sigma}_s\|_{L^\infty})\RC.
\end{align}

Before going further, let us introduce some notations. We denote the set which consists the unit vectors in $\R^n$ by $$\mathbb{S}^{n-1}=\{x\in\R^n:\ |x|=1\}.$$ 
In $X$-ray tomography, a ray goes through the point $x\in\R^n$ and has the direction $\omega\in \mathbb{S}^{n-1}$. Integrating over this ray leads to the X-ray transform $Xf$ of $f$, which is defined as
\begin{align}\label{def_Xray}
    (Xf)(x,\omega) = \int_{\R} f(x+s\omega) ds 
\end{align}
for every $\omega\in \mathbb{S}^{n-1}$. 
We denote a function $g$ in the space $T\mathbb{S}^{n-1}$ by
$$
 \|g\|_{L^2(T\mathbb{S}^{n-1})}^2 = \int_{\mathbb{S}^{n-1}} \|g(\cdot, \omega)\|^2_{L^2(T)}d\omega,
$$
where $T=\{(x,\omega)\in \R^n\times \mathbb{S}^{n-1}:\ x\cdot \omega=0 \}$.
We also denote the space $$\p\Omega\times \mathbb{S}^{n-1}_-=\{(x,\omega)\in \p\Omega\times \mathbb{S}^{n-1}:\ n_{x}\cdot \omega<0\}.$$
In particular, one can deduce that 
there is a constant $C_0>0$ such that
\begin{align}\label{X_1}
    \|Xf\|_{L^2(T\mathbb{S}^{n-1})}\leq C_0 \|Xf\|_{L^\infty(\p\Omega\times \mathbb{S}^{n-1}_-)}
\end{align}
for all functions $Xf$ in $L^\infty(\p\Omega\times \mathbb{S}^{n-1}_-)$. 
Further, from Theorem 3.1 in \cite{LN1983}, for any function $f\in H^{-1/2}(\Omega)$ with compact support in $\Omega$, then there exists a constant $C_1>0$ such that 
\begin{align}\label{X_2}
    \|f\|_{H^{-1/2}(\Omega)}\leq C_1\|Xf\|_{L^2(T\mathbb{S}^{n-1})}.
\end{align}
Combining \eqref{X_1} and $\eqref{X_2}$, this leads to 
\begin{align}\label{X_3}
    \|f\|_{H^{-1/2}(\Omega)}\leq C_0C_1\|Xf\|_{L^\infty(\p\Omega\times \mathbb{S}^{n-1}_-)}.
\end{align}
We use it to show the following stability estimate.
\begin{proposition}\label{prop:H_onehalf}
We denote $\hat{v}_0'=v_0'/|v_0'|$ to be the unit vector. Then
\begin{align*} 
     \| \sigma_{\Kn}-\tilde{\sigma}_{\Kn} \|_{H^{-1/2}(\Omega)}
     & \leq C_0C_1\|X(\sigma_{\Kn} -\tilde{\sigma}_{\Kn})\|_{L^\infty (\p\Omega\times \mathbb{S}^{n-1}_-)}\\
     &  \leq C_0C_1M_2 e^{\beta_{\Kn}} \| \mathcal{A}-\tilde{\mathcal{A}}\|_{*},
\end{align*}
where $\beta_{\Kn}>0$ is defined as in \eqref{beta_kn} and $M_2$ is the upper bound of $v$ stated in~\eqref{eqn:v_bound}.
\end{proposition}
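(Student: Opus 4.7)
\emph{Proof plan.} The first inequality in the chain is immediate from the abstract estimate \eqref{X_3} applied to $f = \sigma_{\Kn} - \tilde{\sigma}_{\Kn}$, since the difference belongs to $H^{-1/2}(\Omega)$ (indeed, to $H^{3/2+r'}$) and has compact support in $\Omega$ by the definition \eqref{eqn:P_space} of $\mathcal{P}$. So the work is concentrated in the second inequality, namely
\begin{equation*}
\|X(\sigma_{\Kn} - \tilde{\sigma}_{\Kn})\|_{L^\infty(\partial\Omega\times \mathbb{S}^{n-1}_-)} \leq M_2 \, e^{\beta_{\Kn}} \, \|\mathcal{A}-\tilde{\mathcal{A}}\|_*,
\end{equation*}
and this is where I would start.

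The plan is to begin from \eqref{inequality}, which already provides the lower bound
\begin{equation*}
\|\mathcal{A}-\tilde{\mathcal{A}}\|_* \geq e^{-\beta_{\Kn}} \left| \int_{\R} \bigl( \sigma_{\Kn} - \tilde{\sigma}_{\Kn} \bigr)(x_0' + t v_0') \, dt \right|
\end{equation*}
for every $(x_0',v_0') \in \Gamma_-$. The integrand on the right is a line integral along the direction $v_0' \in V$, which is not a unit vector, whereas the X-ray transform \eqref{def_Xray} is defined using unit directions $\omega \in \mathbb{S}^{n-1}$. The natural next step is the change of variables $s = t |v_0'|$, which turns the line integral into $|v_0'|^{-1} X(\sigma_{\Kn}-\tilde{\sigma}_{\Kn})(x_0',\hat{v}_0')$ with $\hat{v}_0' = v_0'/|v_0'|$. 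Rearranging and using $|v_0'| \leq M_2$ from \eqref{eqn:v_bound} gives
\begin{equation*}
|X(\sigma_{\Kn}-\tilde{\sigma}_{\Kn})(x_0',\hat{v}_0')| = |v_0'| \left| \int_{\R}(\sigma_{\Kn}-\tilde{\sigma}_{\Kn})(x_0'+t v_0') \, dt \right| \leq M_2 \, e^{\beta_{\Kn}} \|\mathcal{A}-\tilde{\mathcal{A}}\|_*.
\end{equation*}
Since for every $(x_0',\omega) \in \partial\Omega \times \mathbb{S}^{n-1}_-$ we can realize $\omega = \hat{v}_0'$ from some $v_0' \in V$ with $n_{x_0'}\cdot v_0' < 0$, taking the supremum over $(x_0',\omega)$ yields the desired $L^\infty$ bound on $X(\sigma_{\Kn}-\tilde{\sigma}_{\Kn})$.

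Finally, chaining this $L^\infty$ estimate with \eqref{X_3} gives the stated $H^{-1/2}$ estimate. I do not anticipate a real obstacle here; the proposition is essentially a transcription of \eqref{inequality} into X-ray language followed by an application of a standard Sobolev estimate for the X-ray transform. The only point to watch is the bookkeeping between $v_0'$ and its unit normalization $\hat{v}_0'$, together with checking that the compact-support hypothesis required by \eqref{X_2} is supplied by the definition of $\mathcal{P}$, so that \eqref{X_3} can be applied to $\sigma_{\Kn}-\tilde{\sigma}_{\Kn}$.
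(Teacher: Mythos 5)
Your proposal is correct and follows essentially the same route as the paper: the paper's proof likewise applies the change of variable $t\mapsto |v_0'|t$ to \eqref{inequality} to obtain $\|\mathcal{A}-\tilde{\mathcal{A}}\|_*\geq e^{-\beta_{\Kn}}|v_0'|^{-1}|X(\sigma_{\Kn}-\tilde{\sigma}_{\Kn})(x_0',\hat{v}_0')|$, bounds $|v_0'|\leq M_2$, and then invokes \eqref{X_3}. Your additional remarks on the compact-support hypothesis and on realizing each unit direction $\omega$ by some $v_0'\in V$ are sound bookkeeping that the paper leaves implicit.
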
 
\begin{proof}
For any $(x_0',v_0')\in\Gamma_-$, by applying the change of variable $t \mapsto|v_0'|t$ in \eqref{inequality}, then we have
\begin{align*}
 \| \mathcal{A}-\tilde{\mathcal{A}}\|_{*}
 & \geq e^{-\beta_{\Kn}} |v_0'|^{-1} |X(\sigma_{\Kn} -\tilde{\sigma}_{\Kn})(x_0',\hat{v}_0')| . 
\end{align*}
The desired estimates follow by applying \eqref{X_3} to the above identity.
\end{proof}

\subsection{Proof of Theorem \ref{thm1.1}}
The norm used in Proposition~\ref{prop:H_onehalf} is rather weak. But since we assume a-priori that the media is in the function space $\mathcal{P}$ with higher regularity as defined in~\eqref{eqn:P_space}, interpolation formula could be used to lift the stability estimate to a stronger result. Recall the interpolation formula which states the existence of constant $C_2$ so that:
\begin{align}\label{interpolation}
\|u\|_{H^{s}(\R^n)}\leq C_2 \|u\|^\theta_{H^{s_1}(\R^n)} \|u\|^{1-\theta}_{H^{s_2}(\R^n)}\,,
\end{align}
for any $s_1<s_2$ and that $s=\theta s_1+(1-\theta)s_2$ with $0< \theta<1$, and that the constant purely depends on $C_2=C_2(n,s_1,s_2)$. One simply needs to choose a special set of $(s_1,s_2)$ to achieve the results in Theorem~\ref{thm1.1}.

\begin{proof}[\textit{Proof of Theorem \ref{thm1.1}:}]
For $r'>0$, let $0<r<r'$ and set $s=\frac{3}{2}+r$. For $s_1 = -\frac{1}{2}$ and $s_2 = \frac{3}{2}+r'$, we easily see that $s_1<s<s_2$ and there is a constant $\theta$ such that $s=\theta s_1+(1-\theta)s_2$. Using the interpolation formula~\eqref{interpolation}, we have
\begin{align*}
       \| \sigma_{\Kn}-\tilde{\sigma}_{\Kn} \|_{H^{3/2+r}} \leq C_2\| \sigma_{\Kn}-\tilde{\sigma}_{\Kn} \|_{H^{3/2 + r'}}^{1-\theta} \| \sigma_{\Kn}-\tilde{\sigma}_{\Kn} \|_{H^{-1/2}}^\theta\,.
       \end{align*}
From the hypothesis of Theorem \ref{thm1.1}, one has $\sigma_{a},\ \sigma_s,\ \tilde{\sigma}_a$, and $\tilde{\sigma}_s$ are in $\mathcal{P}$ in~\eqref{eqn:P_space}. Combining with Proposition~\ref{prop:H_onehalf}, the inequality could be further bounded by:
       \begin{align*}
       \| \sigma_{\Kn}-\tilde{\sigma}_{\Kn} \|^{1/\theta}_{H^{3/2+r}} &\leq  \left(2C_2 M_3^{1-\theta}\LC \Kn+ \Kn^{-1}\RC^{1-\theta}\right)^{1/\theta} \| \sigma_{\Kn}-\tilde{\sigma}_{\Kn} \|_{H^{-1/2}} \\
       &\leq\LC 2C_2 M_3^{1-\theta}\LC \Kn+ \Kn^{-1}\RC^{1-\theta}\RC^{1/\theta} C_0 C_1M_2  e^{\beta_{\Kn}}\|\mathcal{A}-\tilde{\mathcal{A}} \|_*.
\end{align*}

According to the definition of $\beta_{\Kn}$, one has
\begin{align*} 
\beta_{\Kn}\leq C(\Kn+\Kn^{-1}) 
\end{align*}
for some constant $C$ independent of $\Kn$. Then applying Sobolev imbedding theorem, we have the following $L^\infty$ estimate for $\sigma_{\Kn}-\tilde{\sigma}_{\Kn}$:
\begin{align}\label{totalsigma}
\| \sigma_{\Kn}-\tilde{\sigma}_{\Kn} \|_{L^\infty}^{1/\theta}\leq 
\LC 2C_2 M_3^{1-\theta}\LC \Kn+ \Kn^{-1}\RC^{1-\theta}\RC^{1/\theta}  C_0 C_1 C_3M_2  e^{C(\Kn+\Kn^{-1})}\|\mathcal{A}-\tilde{\mathcal{A}} \|_*,
\end{align}
where the constants $C_j$, $j=0\,,\cdots,3$, are independent of $\Kn$.

Finally, we are ready to show \eqref{sigma_s_equ_1_thm} and \eqref{sigma_a_equ_1_thm}. From the definition of $\sigma_{\Kn}$ and \eqref{totalsigma}, we have
\begin{align*}
 \Kn\|\sigma_a-\tilde{\sigma}_a\|_{L^\infty} \leq \Kn^{-1}\|\sigma_s-\tilde{\sigma}_s\|_{L^\infty} +  C\LC \Kn+ \Kn^{-1}\RC^{1-\theta}  e^{C\theta(\Kn+\Kn^{-1})} \|\mathcal{A}-\tilde{\mathcal{A}} \|_*^\theta,
\end{align*}
where $C$ independent of $\Kn$. In particular, we have the following estimate for $\sigma_a-\tilde{\sigma}_a$: 
\begin{align}\label{sigma_a_inq}
\|\sigma_a-\tilde{\sigma}_a\|_{L^\infty} \leq   \Kn^{-2}\|\sigma_s-\tilde{\sigma}_s\|_{L^\infty}  +  C\Kn^{-1}\LC \Kn+ \Kn^{-1}\RC^{1-\theta}  e^{C\theta(\Kn+\Kn^{-1})} \|\mathcal{A}-\tilde{\mathcal{A}} \|_*^\theta.
\end{align} 
On the other hand, one can also derive an estimate for $\sigma_s-\tilde{\sigma}_s$:
\begin{align}\label{sigma_s_equ}
\|\sigma_s-\tilde{\sigma}_s\|_{L^\infty} \leq  \Kn^2 \|\sigma_a-\tilde{\sigma}_a\|_{L^\infty} +  C\Kn \LC \Kn+ \Kn^{-1}\RC^{1-\theta}  e^{C\theta(\Kn+\Kn^{-1})} \|\mathcal{A}-\tilde{\mathcal{A}} \|_*^\theta.
\end{align}

Assume that $\|\mathcal{A}-\tilde{\mathcal{A}} \|_*<1$. Under the assumption that scattering dominates absorption in a region of interest, we consider the case:
\begin{align}
\Kn< \min\{|\log( \|\mathcal{A}-\tilde{\mathcal{A}} \|_*)|^{-\alpha},\ 1 \}
\end{align}
for some constant $\alpha>0$. Then from \eqref{sigma_s_equ} and from the hypothesis, $\sigma_a,\ \tilde{\sigma}_a\in\mathcal{P}$, it follows that
\begin{align}\label{sigma_s_equ_1}
\|\sigma_s-\tilde{\sigma}_s\|_{L^\infty} 
&\leq  C\Kn^2 M_3 +  C\Kn \LC \Kn+ \Kn^{-1}\RC^{1-\theta}  e^{C\theta \Kn^{-1}} \|\mathcal{A}-\tilde{\mathcal{A}} \|_*^\theta  \notag\\
&\leq C\Kn |\log( \|\mathcal{A}-\tilde{\mathcal{A}} \|_*)|^{-\alpha} + C\Kn \LC \Kn+ \Kn^{-1}\RC^{1-\theta}  e^{ C\theta{\Kn}^{-1}} \|\mathcal{A}-\tilde{\mathcal{A}} \|_*^\theta\,,
\end{align}

On the other hand, one can also use a similar argument to derive the following estimate based on \eqref{sigma_a_inq},
\begin{align}\label{sigma_a_inq_1}
\|\sigma_a-\tilde{\sigma}_a\|_{L^\infty} 
&\leq C\Kn^{-3} |\log( \|\mathcal{A}-\tilde{\mathcal{A}} \|_*)|^{-\alpha} + C\Kn ^{-1}\LC \Kn+ \Kn^{-1}\RC^{1-\theta}  e^{ C\theta{\Kn}^{-1}} \|\mathcal{A}-\tilde{\mathcal{A}} \|_*^\theta.
\end{align}
This completes the proof of the theorem.
\end{proof}

\vskip.4cm 
We mention the following observations about the stability estimates which are derived in the proof of Theorem \ref{thm1.1}:

\begin{remark}
We briefly discuss the derived estimates \eqref{sigma_a_inq} and \eqref{sigma_s_equ} with the results obtained in the paper \cite{CLW} here.
Assume that $\sigma_s=\tilde\sigma_s$ is given which is the setup in section 3.1 and 3.2 in \cite{CLW}. In this setting, we study the stability of the coefficient $\sigma_a$ depending on the Knudsen number $\Kn$. Then we obtain a similar result as in \cite{CLW} where the linearized inverse problem is considered. In particular, we conclude from \eqref{sigma_a_inq} that the difference of $\sigma_a-\tilde{\sigma}_a$ could become larger if $\Kn$ is decreasing. This also means that a smaller $\Kn$ leads to worse distinguishability of the absorption coefficient.

When $\sigma_a=\tilde{\sigma}_a$ is known, similar to the observation in section 4.3 in \cite{CLW}, we have from \eqref{sigma_s_equ} that the difference of $\sigma_s-\tilde\sigma_s$ might increase as $\Kn$ shrinks.
\end{remark}

\begin{remark}
We specifically mention the goal of the estimates \eqref{sigma_s_equ_1} and \eqref{sigma_a_inq_1}. In the zero limit of $\Kn$, the radiative transfer equation becomes the diffusion equation, whose Dirichlet-to-Neumann map is shown to reconstruct the media with logarithmic instability. This is reflected in the corollary above as well. When $\Kn$ is small enough, the $\log( \|\mathcal{A}-\tilde{\mathcal{A}} \|_*)$ term becomes more and more dominant, leading to log-type illposedness.
\end{remark}





 \section{Numerics}\label{numerics}
We present numerical evidence in this section. We utilize a simpler model in $2D$:
\begin{equation}\label{eqn:rte_num}
v\cdot\nabla f = \cos\theta\partial_xf+\sin\theta\partial_yf = \frac{\sigma_s}{\Kn}\left(\langle f\rangle-f\right),
\end{equation}
where $\langle f\rangle_v = \int f\rd{\theta}$ with $\rd{\theta}$ is normalized. This is the critical case in the sense that the effective absorption is set to be zero. To demonstrate stability, we set two sets of media to be:
\begin{equation*}
\sigma_s = 1\quad \text{for}\,(x,y)\in\Omega\,;\quad \tilde{\sigma}_s = \begin{cases}1\quad &\text{for}\,(x,y)\in B\\1+z\quad &\text{for}\,(x,y)\in\Omega\backslash B\end{cases}\,,
\end{equation*}
with $\Omega =[0,0.6]^2$ and $B$ being a ball centered at $(0.3\,,0.3)$ with radius $0.2$. Obviously $\|\sigma_s-\tilde{\sigma}_s\|_{L^\infty} = z$, and in computation we choose $z$ to be $0.1\times \{1, 1/2, 1/4, 1/8\}$.

Denote the associated albedo operator $\mathcal{A}$ and $\tilde{\mathcal{A}}$, respectively, that map the incoming data to the outgoing data, and also denote $\mathcal{A}_1$ and $\tilde{\mathcal{A}}_1$ the leading order expansion of the albedo operator, as defined in~\eqref{singular_A1}. We will show numerical evidence from three aspects:
\begin{enumerate}
\item $\|\mathcal{A}_1\|_*$ decays exponentially fast with respect to $\Kn$;
\item For fixed $\Kn$, $\|\mathcal{A}-\tilde{\mathcal{A}}\|_\ast$ grows in a Lipschitz manner with respect to $z=\|\sigma_s-\tilde{\sigma}_s\|_{L^\infty}$;
\item For fixed $z$, $\|\mathcal{A}-\tilde{\mathcal{A}}\|_\ast$ blows up exponentially fast with respect to $\Kn$.
\end{enumerate}
Through the entire computation, we use $\mathrm{d}x = 0.025$ and $\mathrm{d}\theta = 2\pi/24$ to resolve all possible small scales. To obtain the operator norm, we need to numerically exhaust all possible boundary conditions. Term the collection of discrete boundary conditions $\mathcal{S}$. According to the numerical operator norm:
\begin{equation}\label{eqn:num_norm}
\|\mathcal{A}_1\|_\ast = \text{sup}_{\phi\in\mathcal{S}}\frac{\|\mathcal{A}_1\phi\|_{L^1(\Gamma_+)}}{\|\phi\|_{L^1(\Gamma_-)}}\,,\quad\text{and}\quad 
\|\mathcal{A}-\mathcal{A}_1\|_\ast = \text{sup}_{\phi\in\mathcal{S}}\frac{\|\mathcal{A}\phi-\mathcal{A}_1\phi\|_{L^1(\Gamma_+)}}{\|\phi\|_{L^1(\Gamma_-)}}\,,
\end{equation}
and to obtain $\mathcal{A}_1\phi$, we numerically solve:
\begin{equation}\label{eqn:x_ray_num}
v\cdot\nabla_xf = -\frac{1}{\Kn}\sigma_sf\,,\quad\text{with}\quad f|_{\Gamma_-}=\phi\,,
\end{equation}
and then confine the solution on $\Gamma_+$:
\begin{equation}
\mathcal{A}_1\phi = f|_{\Gamma+}\,.
\end{equation}
Similarly one can obtain $\mathcal{A}\phi$ by replacing~\eqref{eqn:x_ray_num} with~\eqref{eqn:rte_num}.

\noindent\textbf{Exponential decay in $\Kn$ of $\mathcal{A}_1$:}\\
In the first experiment, we set $z=0.1$ and let $\Kn = 2^k$ with $k$ changes from $1$ to $-3$ in the radiative transfer equation. We also evaluate $\|\mathcal{A}_1\|_\ast$ using~\eqref{eqn:num_norm}. Numerically we observe that $\mathcal{A}_1$'s operator norm decays with respect to $1/\Kn$, as seen in Figure~\ref{fig:A_A1}. The computation suggests that:
\begin{equation*}
\|\mathcal{A}_1\|_\ast\sim e^{-\frac{0.1}{\Kn}}\,,
\end{equation*}
In the zero limit of $\Kn$, the operator norm is extremely small. We also numerically evaluate $\mathcal{A}-\mathcal{A}_1$'s operator norm and study its dependence on $\Kn$. The numerical evidence shows that as $\Kn$ shrinks to zero, the discrepancy between $\mathcal{A}$ and $\mathcal{A}_1$ grows, which agrees with the observation made in~\cite{Bal_transport, Stefanov_2003}. We emphasize that $\mathcal{A}_1$ contains the most singular information in $\mathcal{A}$, by separating which one is able to recover the absorption coefficients ($\sigma_s$ here). In the zero limit of $\Kn$, $\mathcal{A}$ and $\mathcal{A}_1$ have big discrepancy, meaning $\mathcal{A}_1$ has very limited contribution in $\mathcal{A}$. This could potentially make the separation hard, leading to bad reconstruction.
\begin{figure}[htb]
	\includegraphics[width = 0.4\textwidth, height = 2.3in]{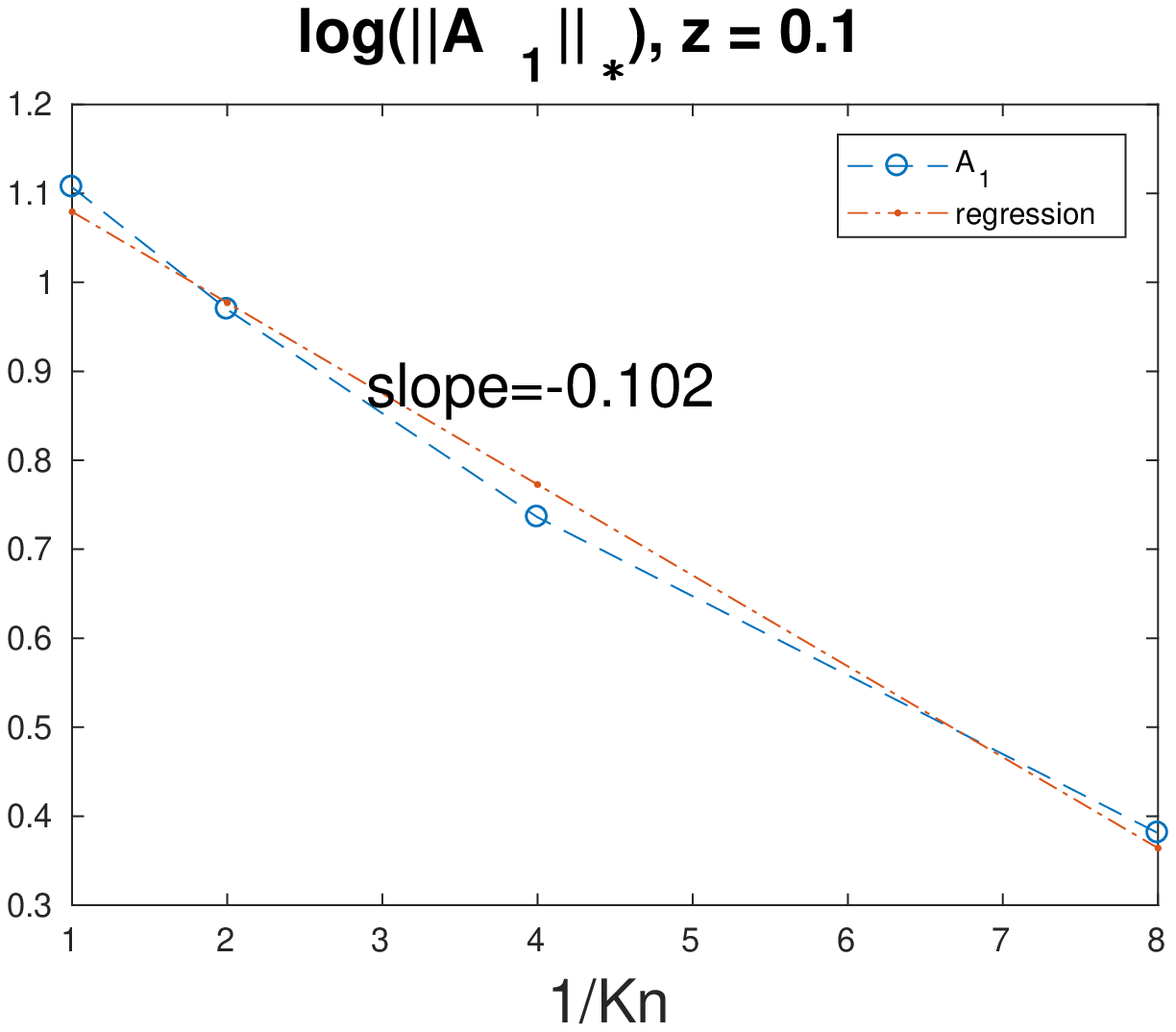}
	\includegraphics[width = 0.4\textwidth, height = 2.3in]{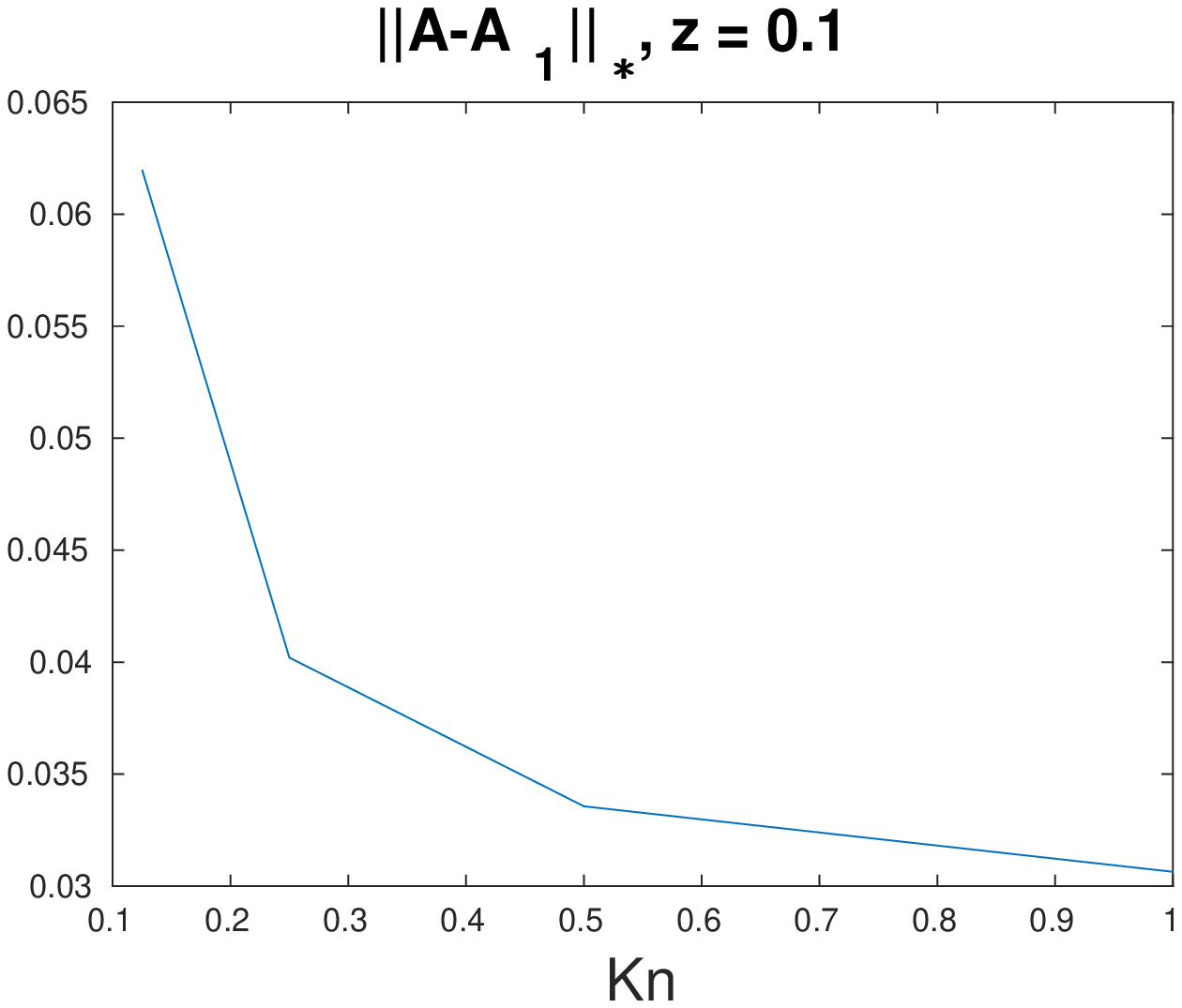}
	\caption{The plot on the left shows that $\ln\|\mathcal{A}_1\|_\ast$ linearly decays as $1/\Kn$ increases. The plot on the right shows that $\|\mathcal{A}-\mathcal{A}_1\|_\ast$ blows up as $\Kn$ converges to zero.  }\label{fig:A_A1}
\end{figure}

\noindent\textbf{Lipschitz continuity in $z$:}\\
In the second experiment we set $\Kn=1$ and study the dependence of $\|\mathcal{A} - \tilde{\mathcal{A}}\|_\ast$ on $z = \|\sigma_s-\tilde{\sigma}_s\|_{L^\infty}$. The numerical experiment suggests that the discrepancy between the two albedo operators increase linearly with respect to $z$, which agrees with our Lipschitz continuity result. 
\begin{figure}[htb]
	\includegraphics[height = 2.5in]{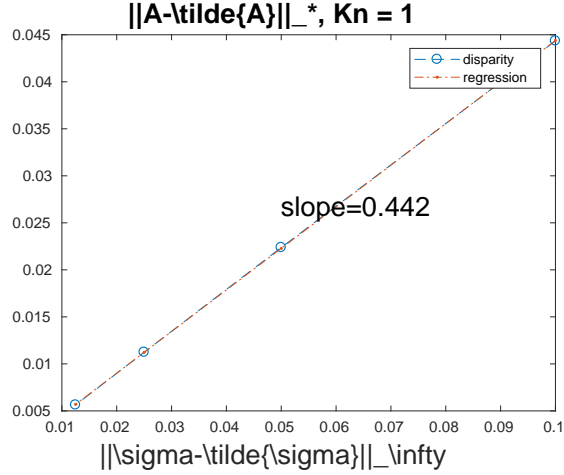}
	\caption{The plot shows that for fixed $\Kn=1$, bigger $\|\sigma_s-\tilde{\sigma}_s\|$ leads to bigger $\|\mathcal{A}-\tilde{\mathcal{A}}\|_\ast$ and they form a linear dependence.}\label{fig:L1_z}
\end{figure}

\noindent\textbf{Exponential blow-up in $\Kn$:}\\
In the third experiment, we fix $z=0.025$ and compare the difference between the two albedo operators $\mathcal{A}$ and $\tilde{\mathcal{A}}$ as a function of $\Kn$. It is expected that the difference between the two decays as $e^{-\frac{c}{\Kn}}$, according to Theorem~\ref{thm1.1}, which is also what we observe numerically. As seen in Figure~\ref{fig:l1}, $\ln\left(\|\mathcal{A} - \tilde{\mathcal{A}}\|_\ast\right)$ is a linear function of $1/\Kn$, with slope $-0.05$. This means:
\begin{equation*}
\ln{ \|\mathcal{A}-\tilde{\mathcal{A}}\|_\ast} \sim -\frac{0.05}{\Kn}\quad\Rightarrow\quad \|\mathcal{A}-\tilde{\mathcal{A}}\|_\ast \sim e^{-\frac{0.05}{\Kn}}\,.
\end{equation*}
\begin{figure}[htb]
	\includegraphics[height = 2.5in]{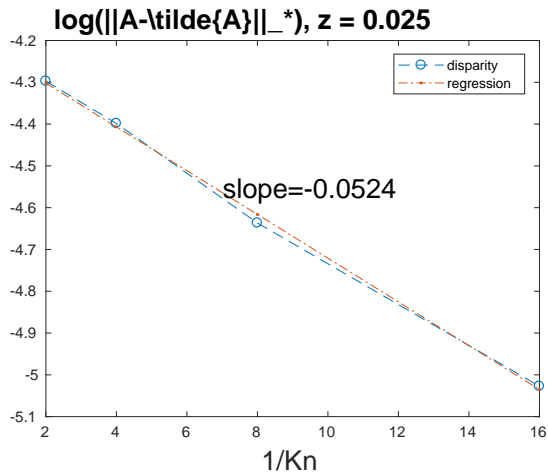}
	\caption{The plot shows that as $\Kn\to0$, $\frac{1}{\Kn}\to\infty$, and $\|\mathcal{A}-\tilde{\mathcal{A}}\|_\ast \sim e^{-\frac{0.05}{\Kn}}$.}\label{fig:l1}
\end{figure}


We emphasize before finishing the section that the numerical experiment can be done for only limited choices of $\Kn$, $\sigma_s$ and $z$, but the theory gives an upper bound for all possible combinations. It is also possible to design special media whose inverse stability is better than suggested by the theorem.

\vskip1cm
\noindent\textbf{Acknowledgements.}
The three authors would like to thank IMA for organizing the workshop ``Optical Imaging and Inverse Problems" February 13 - 17, 2017, during which the project was initiated. R.-Y. Lai is partially supported by the start-up grant from the University of Minnesota. Q. Li is partially supported by NSF grant DMS-1619778 and TRIPODS 1740707. G. Uhlmann is supported in part by NSF grant  DMS-1265958 and  a Si-Yuan Professorship at HKUS.

\bibliographystyle{abbrv}

\bibliography{transbib}

\begin{thebibliography}{10}

\bibitem{A1}
G.~Alessandrini.
\newblock Stable determination of conductivity by boundary measurements.
\newblock {\em Appl. Anal.}, 27:153--172, 1988.

\bibitem{Astability}
G.~Alessandrini.
\newblock Open issues of stability for the inverse conductivity problem.
\newblock {\em J. Inverse Ill-posed Problems}, 15:451--460, 2007.

\bibitem{Arridge1999}
S.~R. Arridge.
\newblock Optical tomography in medical imaging.
\newblock {\em Inverse Problems}, 15:R41--R93, 1999.

\bibitem{Bal_transport}
G.~Bal.
\newblock Inverse transport theory and applications.
\newblock {\em Inverse Problems}, 25:053001, 2009.

\bibitem{Bal14}
G.~Bal and A.~Jollivet.
\newblock Stability estimates in stationary inverse transport.
\newblock {\em Inverse problems and Imaging}, 2:427--454, 2008.

\bibitem{Bal10}
G.~Bal and A.~Jollivet.
\newblock Stability estimates for time-dependent inverse transport.
\newblock {\em SIAM J. Math. Anal.}, 42(2):679--700, 2010.

\bibitem{Bal18}
G.~Bal and A.~Jollivet.
\newblock Generalized stability estimates in inverse transport theory.
\newblock {\em Inverse problems and Imaging}, 12(1):59--90, 2018.

\bibitem{time_harmonic}
G.~Bal, A.~Jollivet, I.~Langmore, and F.~Monard.
\newblock Angular average of time-harmonic transport solutions.
\newblock {\em Communications in Partial Differential Equations},
  36(6):1044--1070, 2011.

\bibitem{BalMonard_time_harmonic}
G.~Bal and F.~Monard.
\newblock Inverse transport with isotropic time-harmonic sources.
\newblock {\em SIAM Journal on Mathematical Analysis}, 44(1):134--161, 2012.

\bibitem{BardosSantosSentis:84}
C.~Bardos, R.~Santos, and R.~Sentis.
\newblock Diffusion approximation and computation of the critical size.
\newblock {\em Trans. Amer. Math. Soc.}, 284(2):617--649, 1984.

\bibitem{BLP:79}
A.~Bensoussan, J.~Lions, and G.~Papanicolaou.
\newblock Boundary-layers and homogenization of transport processes.
\newblock {\em J. Publ. RIMS Kyoto Univ.}, 15:53--157, 1979.

\bibitem{CLW}
K.~Chen, Q.~Li, and L.~Wang.
\newblock Stability of stationary inverse transport equation in diffusion
  scaling.
\newblock {\em Inverse Problems}, 34(2), 2018.

\bibitem{CS1}
M.~Choulli and P.~Stefanov.
\newblock Scattering inverse pour l'\'equation du transport et relations entre
  les op\'erateurs de scattering et d'alb\'edo.
\newblock {\em C. R. Acad. Sci. Paris}, 320:947--952, 1995.

\bibitem{CS2}
M.~Choulli and P.~Stefanov.
\newblock Inverse scattering and inverse boundary value problems for the linear
  boltzmann equation.
\newblock {\em Comm. P.D.E.}, 21:763--785, 1996.

\bibitem{CS3}
M.~Choulli and P.~Stefanov.
\newblock Reconstruction of the coefficients of the stationary transport
  equation from boundary measurements.
\newblock {\em Inverse Problems}, 12:L19--L23, 1996.

\bibitem{CS98}
M.~Choulli and P.~Stefanov.
\newblock An inverse boundary value problem for the stationary transport
  equation.
\newblock {\em Osaka J. Math.}, 36:87--104, 1998.

\bibitem{Colak}
S.~B. Colak, D.~G. Papaioannou, W.~G. Hooft, M.~B. Van~der Mark, H.~Schomberg,
  J.~C.~J. Paasschens, J.~B.~M. Melissen, and N.~A. A.~J. Van~Asten.
\newblock Tomographic image reconstruction from optical projections in
  light-diffusing media.
\newblock {\em Appl. Opt.}, 36:180--213, 1997.

\bibitem{DDA}
H.~Dehghani, D.~T. Delpy, and S.~R. Arridge.
\newblock Photon migration in non-scattering tissue and the effects on image
  reconstruction.
\newblock {\em Phys. Med. Biol.}, 44:2897--2906, 1999.

\bibitem{Fantini}
S.~Fantini, M.~A. Franceschini, G.~Gaida, E.~Gratton, H.~Jess, W.~W. Mantulin,
  K.~T. Moesta, P.~Schlag, and M.~Kaschke.
\newblock Photon migration in non-scattering tissue and the effects on image
  reconstruction.
\newblock {\em Med. Phys.}, 23:149--157, 1996.

\bibitem{HA1996}
A.~H. Hielscher and R.~E. Alcouffe.
\newblock Non-diffusive photon migration in homogenous and heterogenous
  tissues.
\newblock {\em Proc. SPIE}, 2925:22--30, 1996.

\bibitem{I}
V.~Isakov.
\newblock Increasing stability for the {S}chr\"odinger potential from the
  dirichlet-to-neumann map.
\newblock {\em DCDS-S}, 4:631--640, 2011.

\bibitem{Ibook}
V.~Isakov.
\newblock {\em Inverse Problems for Partial Differential Equations}.
\newblock Springer-Verlag, New York, 2017.

\bibitem{ILW16}
V.~Isakov, R.-Y. Lai, and J.-N. Wang.
\newblock Increasing stability for the conductivity and attenuation
  coefficients.
\newblock {\em SIAM J. Math. Anal.}, 48(1):569--594, 2016.

\bibitem{INUW}
V.~Isakov, S.~Nagayasu, G.~Uhlmann, and J.-N. Wang.
\newblock Increasing stability of the inverse boundary value problem for the
  {S}chr\"odinger equation.
\newblock {\em Contemp. Math.}, 615:131--141, 2014.

\bibitem{iw14}
V.~Isakov and J.-N. Wang.
\newblock Increasing stability for determining the potential in the
  {S}chr\"odinger equation with attenuation from the dirichlet-to-neumann map.
\newblock {\em Inverse Problems and Imaging}, 8:1139--1150, 2014.

\bibitem{Somersalo2005}
J.~Kaipio and E.~Somersalo.
\newblock {\em Statistical and Computational Inverse Problems}.
\newblock Springer, 2005.

\bibitem{Ldiff}
R.-Y. Lai.
\newblock Increasing stability for the diffusion equation.
\newblock {\em Inverse Problems}, 30:075010, 2014.

\bibitem{LiLuSun2015JCP}
Q.~Li, J.~Lu, and W.~Sun.
\newblock Diffusion approximations of linear transport equations: Asymptotics
  and numerics.
\newblock {\em J. Comp. Phys}, 292:141--167, 2015.

\bibitem{LLS_geometry}
Q.~Li, J.~Lu, and W.~Sun.
\newblock Validity and regularization of classical half-space equations.
\newblock {\em Journal of Statistical Physics}, 166(2):398--433, 2017.

\bibitem{Liang}
L.~Liang.
\newblock Increasing stability for the inverse problem of the {S}chr\"odinger
  equation with the partial cauchy data.
\newblock {\em Inverse Problems and Imaging}, 9:469--478, 2015.

\bibitem{LN1983}
A.~Louis and F.~Natterer.
\newblock Mathematical problems of computerized tomography.
\newblock {\em Proceedings of the IEEE}, 71(3):379--389, 1983.

\bibitem{Mandache}
N.~Mandache.
\newblock Exponential instability in an inverse problem for the {S}chr\"odinger
  equation.
\newblock {\em Inverse Problems}, 17:1435--1444, 2001.

\bibitem{McCor}
N.~J. McCormick.
\newblock Inverse radiative transfer problems: A review.
\newblock {\em Nuclear Sci. Engrg.}, 112:185--198, 1992.

\bibitem{NUW}
S.~Nagayasu, G.~Uhlmann, and J.-N. Wang.
\newblock Increasing stability in an inverse problem for the acoustic equation.
\newblock {\em Inverse Problems}, 29:025012, 2013.

\bibitem{Stefanov_2003}
P.~Stefanov.
\newblock {\em Inverse problems in transport theory}, volume~47.
\newblock Inside Out: Inverse Problems; MSRI Publications, edited by G.
  Uhlmann, 2003.

\bibitem{SU2d}
P.~Stefanov and G.~Uhlmann.
\newblock Optical tomography in two dimensions.
\newblock {\em Methods Appl. Anal.}, 10:1--9, 2003.

\bibitem{uhlmann2009electrical}
G.~Uhlmann.
\newblock Electrical impedance tomography and {C}alder{\'o}n's problem.
\newblock {\em Inverse problems}, 25(12):123011, 2009.

\bibitem{Wang1999}
J.-N. Wang.
\newblock Stability estimates of an inverse problem for the stationary
  transport equation.
\newblock {\em Ann. Inst. H. Poincar\'e Phys. Th\'eor.}, 70(5):473--495, 1999.

\bibitem{WG2014}
L.~Wu and Y.~Guo.
\newblock Geometric correction for diffusive expansion of steady neutron
  transport equation.
\newblock {\em Communications in Mathematical Physics}, 336(3):1473--1553,
  2015.

\end{thebibliography}

\end{document}